\title[On quasismooth weighted complete intersections]{On
quasismooth weighted \\ complete intersections}
\author{Jheng-Jie Chen \and Jungkai A. Chen \and Meng Chen}
\address{\rm Department of Mathematics, National Taiwan University, Taipei, 106,
Taiwan} \email{d94221006@ntu.edu.tw}
\address{\rm Department of Mathematics, National Taiwan University, Taipei, 106,
Taiwan} \email{jkchen@math.ntu.edu.tw}
\address{\rm Institute of Mathematics, Fudan University,
Shanghai, 200433, People's Republic of China}
\email{mchen@fudan.edu.cn}
\thanks{The second author was partially supported by TIMS, NCTS/TPE
and National Science Council of Taiwan. The third author was
supported by both the National Outstanding Young Scientist
Foundation (\#10625103) and the NNSFC (\#10731030)}
\newcommand{\bQ}{{\mathbb Q}}
\newcommand{\OO}{\mathcal O}
\newcommand{\PPP}{\mathbb P}
\newcommand\CC{{\mathbb{C}}}
\newcommand\ZZ{{\mathbb{Z}}}
\DeclareMathOperator{\Codim}{codim}
\DeclareMathOperator{\Proj}{Proj}
\DeclareMathOperator{\rk}{rank}
\newtheorem{thm}{Theorem}[section]
\newtheorem{lem}[thm]{Lemma}
\newtheorem{cor}[thm]{Corollary}
\newtheorem{prop}[thm]{Proposition}
\newtheorem{claim}[thm]{Claim}
\newtheorem{conj}[thm]{Conjecture}
\theoremstyle{definition}
\newtheorem{setup}[thm]{}
\newtheorem{rem}[thm]{Remark}
\theoremstyle{remark}
\begin{document}
\begin{abstract} We prove two conjectures on weighted complete
inter\-sections (cf.\ \cite{Fletcher}) and give the complete
classification of threefold weighted complete intersections in weighted
projective space that are canonically or anticanonically embedded.
\end{abstract}

\maketitle

\section{\bf Introduction}

Weighted Projective Space, or WPS for short, is a natural generalization
of projective space. Complete intersections in WPS are the source of
interesting examples in the study of algebraic varieties, especially in
higher dimensional birational geometry. Mori and Dolgachev studied the
structure of WPS systematically. Then Reid \cite{C3-f,YPG} and
Iano-Fletcher \cite{Fletcher} discovered many new and interesting
examples of surfaces and threefolds, giving several famous lists.

Following the route of study initiated by Reid and Iano-Fletcher, we are
interested in the following two conjectures in \cite{Fletcher}.


\begin{conj} (\cite[Conjecture~18.19, p.~171]{Fletcher})
\label{conjcod}
\begin{itemize} \item[(1)] There are no canonically embedded threefold
complete intersections in WPS of codimension $>5$.

\item[(2)] There are no anticanonically embedded $\bQ$-Fano threefold
complete intersections in WPS of codimension $>3$.
\end{itemize}
\end{conj}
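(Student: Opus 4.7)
The plan is to combine Iano-Fletcher's quasismoothness criterion with the adjunction formula and to run a tight numerical case analysis. Write the candidate as $X = X_{d_1,\ldots,d_c} \subset \PPP(a_0,\ldots,a_n)$ with $a_0\le\cdots\le a_n$, $d_1\le\cdots\le d_c$, and $n=c+3$. Adjunction gives $K_X = \OO_X(\delta)$ with $\delta := \sum_j d_j - \sum_i a_i$, so $\delta=1$ in case~(1) and $\delta=-1$ in case~(2). The goal is to show $c\le 5$ in~(1) and $c\le 3$ in~(2).

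The first step is to read off, from the quasismoothness criterion applied at each coordinate point $P_i=(0{:}\cdots{:}1{:}\cdots{:}0)$ of the ambient $\PPP(a_0,\ldots,a_n)$, the following dichotomy: either $P_i\notin X$, and then some $f_j$ contains a pure power $x_i^{d_j/a_i}$, forcing $a_i\mid d_j$; or $P_i\in X$, and then the Jacobian rank condition produces $c$ distinct ``tangent partners'' $k_1,\ldots,k_c\in\{0,\ldots,n\}\setminus\{i\}$ together with a choice of $c$ distinct defining polynomials $f_{j_1},\ldots,f_{j_c}$, such that $f_{j_l}$ contains a monomial $x_i^{m_l}x_{k_l}$ of degree $d_{j_l}=m_l a_i + a_{k_l}$. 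Applying the same criterion to coordinate edges and higher strata yields additional compatibility conditions among weights and degrees.

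The second step translates these monomial constraints into global numerical inequalities between $\sum a_i$ and $\sum d_j$. In the pure-power case, each relevant weight $a_i$ is matched injectively (via a Hall-type argument on the coordinate-edge constraints) to some $d_j$ with $a_i\mid d_j$; this, together with the sum identity $\sum d_j = \sum a_i + \delta$, limits the size of the set $V=\{i : P_i \in X\}$ of ``tangent vertices.'' In case~(1) the inequalities are tight enough to conclude $c\le 5$ directly. In case~(2), the same naive count yields only $c\le 5$, and the sharper bound $c\le 3$ requires invoking $\bQ$-Fano terminality of $X$ together with Reid's classification of terminal $3$-fold quotient singularities: these forbid the extremal small-weight configurations that codimension $c=4$ or $5$ would necessarily produce at some $P_i\in X$.

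The main obstacle is controlling the tangent case cleanly, because a single tangent vertex $P_i\in X$ couples $a_i$ to $c$ other weights through the nonlinear relations $d_{j_l}=m_l a_i + a_{k_l}$, and these relations compete with the pure-power matchings for the same set of degrees. I expect the final exclusion of $c=6$ in~(1) and $c=4$ in~(2) to reduce, after the general inequalities above, to a short exhaustive check of the possible multisets $(a_0,a_1,a_2,a_3)$ of smallest weights, showing in each surviving configuration that either a required pure power or a required tangent partner cannot be accommodated given the remaining degrees.
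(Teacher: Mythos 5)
Your raw material (the Jacobian/quasismoothness criterion at coordinate points and strata, going back to Iano-Fletcher's 8.1 and 8.7) is the same as the paper's, but the proposal is missing the one inequality that actually makes the count close, and the workaround you propose for the Fano case is both unnecessary and unsubstantiated. The paper's key step (Proposition~\ref{qs}(2)) is: if $c\ge \dim X+1$, then for each $j=0,\dots,\dim X$ one has $\delta_{c-j}:=d_{c-j}-a_{n-j}\ge a_{\dim X-j}$. This is extracted not from coordinate points or edges but from the $(j+1)$-dimensional coordinate stratum $\{x_0=\cdots=x_{n-j-1}=0\}$, which meets the affine cone $C_X$ in positive dimension because it is cut out there by at most $j$ equations; if the inequality failed, every $f_{j'}$ with $j'\le c-j$ would have all partial derivatives in the directions $x_{\dim X-j},\dots,x_n$ vanishing along that stratum, forcing $\rk J\le \dim X<c$ at a general point, contradicting quasismoothness. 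Feeding these $\dim X+1$ inequalities into the identity $\sum_{j=1}^c\delta_j=a_0+\cdots+a_{\dim X}+\alpha$ together with $\delta_j\ge 1$ gives $c\le\dim X+1+\alpha$ for $\alpha\ge 0$ and $c\le\dim X$ for $\alpha<0$ in three lines, with no enumeration of weight multisets. Your plan, which works vertex by vertex with pure powers and tangent monomials and then attempts a Hall-type matching, has no mechanism for producing these lower bounds on the $\delta_{c-j}$ attached to the \emph{largest} weights, and it is exactly those bounds that kill high codimension.

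The second concrete problem is your treatment of case (2). You assert that the numerical count only gives $c\le 5$ there and that the improvement to $c\le 3$ must come from terminality plus Reid's classification of terminal quotient singularities. This is a misdiagnosis: quasismoothness alone (plus not being an intersection with a linear cone) already forces $c\le\dim X=3$ when $\alpha<0$, because the $\dim X+1$ stratum inequalities sum to at least $a_0+\cdots+a_{\dim X}$, which already exceeds $\delta=a_0+\cdots+a_{\dim X}+\alpha$. Terminality enters the paper only to guarantee quasismoothness (Corollary~\ref{C1}), not to cut down the codimension. As written, your appeal to ``forbidding extremal small-weight configurations'' via the classification of terminal $3$-fold singularities is not an argument: you would have to control the analytic type of $X$ along every coordinate stratum for every candidate weight system, and you give no reason why the surviving configurations form a finite, checkable list --- in particular there is no a priori bound on the multiset $(a_0,a_1,a_2,a_3)$ at that stage of your argument. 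I would expect the proposal to stall precisely there.
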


\begin{conj} (\cite[Conjecture~15.2,
p.~151]{Fletcher})
\label{completeness} The lists of threefold weighted
complete intersections \cite[15.1, 15.4, 16.6, 16.7, 18.16]{Fletcher}
are complete without any degree constraints.
\end{conj}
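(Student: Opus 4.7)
The plan is to derive Conjecture~\ref{completeness} from Conjecture~\ref{conjcod} together with a finite-enumeration argument. Since Conjecture~\ref{conjcod} caps the codimension $c$ of $X_{d_1,\dots,d_c}\subset\PPP(a_0,\dots,a_{3+c})$ at $5$ in the canonical case and at $3$ in the anticanonical $\bQ$-Fano case, only finitely many values of $c$ must be analyzed. For a quasismooth weighted complete intersection the adjunction formula gives $K_X=\OO_X\bigl(\sum_{i=1}^{c}d_i-\sum_{j=0}^{3+c}a_j\bigr)$, so a canonical embedding forces $\sum d_i-\sum a_j=1$ and an anticanonical embedding forces $\sum d_i-\sum a_j=-1$. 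This turns the problem, for each fixed $c$, into the classification of integer tuples $(a_0,\dots,a_{3+c};\,d_1,\dots,d_c)$ satisfying one linear equation together with Iano-Fletcher's quasismoothness criterion.

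First I would recall the quasismoothness criterion from \cite[\S 8]{Fletcher}: for every nonempty subset $I\subset\{0,\dots,3+c\}$, either there is a monomial in some $f_i$ supported on the variables indexed by $I$ whose ``pure power/partner'' structure prescribed by Fletcher holds, or a dual statement must be verified on the complementary indices. Specialized to single-variable subsets, this forces each weight $a_j$ either to divide some $d_i$ (so that $x_j^{d_i/a_j}\in f_i$) or to admit a partner $x_k$ such that $x_j^{m}x_k$ is a monomial of degree $d_i$. In particular one obtains strong divisibility and pairing constraints on the $a_j$.

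The main obstacle is finiteness: a priori neither the $a_j$ nor the $d_i$ are bounded. The key reduction is that a weight $a_j$ strictly greater than $\max_i d_i$ cannot occur quasismoothly, so $a_j\le \max d_i$ for all $j$; hence it suffices to bound $\max d_i$. This I would do by combining the adjunction identity $\sum d_i-\sum a_j=\pm 1$ with the partner/pure-power dichotomy: each large $d_i$ must absorb several small weights in its monomial support, and inductively this limits how large any $d_i$ can be in terms of $c$ alone. This finiteness step is the technical heart of the argument and will require a careful case split according to how many weights ``share'' a given degree through the quasismoothness rule.

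Once finiteness is in place, the final step is a case-by-case enumeration in each codimension $1\le c\le 5$ (respectively $c\le 3$), solving the adjunction equation together with the combinatorial constraints. Each admissible format is then cross-checked against Fletcher's lists \cite[15.1, 15.4, 16.6, 16.7, 18.16]{Fletcher}; the content of the theorem is precisely that no format falls outside those lists, i.e.\ that the original degree constraints used by Fletcher to truncate the search were redundant. The enumeration is voluminous but essentially mechanical; the real work lies in the a priori bound on weights and degrees.
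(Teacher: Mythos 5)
Your outline collapses at exactly the point you identify as ``the technical heart'': the a priori bound on the degrees. You propose to deduce $\max_i d_i\le F(c)$ from quasismoothness and the adjunction identity $\sum d_i-\sum a_j=\pm1$ alone, ``in terms of $c$ alone,'' by an inductive absorption argument. No such purely combinatorial bound is known, and this is precisely why Iano-Fletcher had to truncate his search by degree and leave completeness as a conjecture. The paper's finiteness does \emph{not} come from the monomial/partner dichotomy; it comes from hard geometric inputs that are only available because $X$ is \emph{terminal}: in the canonical case the sharp lower bound $K_X^3\ge\frac{1}{420}$ (Chen--Chen, Zhu), fed into inequality (2.6) to bound $a_0a_1a_2a_3$ and hence $\delta=a_0+\cdots+a_3+\alpha$ and hence all weights and degrees (Lemma~\ref{tmbound}); in the Fano case the bound $-K_X^3\ge\frac{1}{330}$ together with the pseudo-effectivity $-K_X\cdot c_2\ge0$ of Koll\'ar--Miyaoka--Mori--Takagi, which caps the local indices at $24$. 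Your proposal never invokes terminality at all, even though it is part of the hypothesis of the lists being classified and is indispensable to every one of these bounds.

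Relatedly, the enumeration you envisage (solving one linear equation in the $(a_i;d_j)$ subject to monomial conditions) is not how the paper proceeds and would not be tractable even granting a degree bound of the size that actually occurs. The paper instead enumerates only the multiplicities $\mu_i,\nu_i$ of \emph{small} weights and degrees, computes the first few (anti)plurigenera, reconstructs the finitely many possible formal baskets of orbifold points via Reid's Riemann--Roch and the packing partial order, and then recovers the admissible $(a_i;d_j)$ from the Poincar\'e series by the Reid table method. You could in principle replace that last organizational layer by brute force once $M$ is known, but the gap in your argument is the missing proof of any such $M$ --- and the claim that it follows combinatorially from quasismoothness is, as far as anyone knows, false.
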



The aim of this note is to prove the above conjectures. Given a
weighted complete intersection $X=X_{d_1,\dots,d_c}\subset
\PPP(a_0,\dots,a_n)$, its {\em amplitude} is defined to be
$\alpha:=\sum_j d_j-\sum_i a_i$, so that $\omega_X \cong
\OO_X(\alpha)$. In fact, the answer to the first conjecture, under
quasismooth assumption, can be put in a more general form.

\begin{thm} \label{cod}
Let $X=X_{d_1,\dots,d_c}\subset \PPP(a_0,\dots,a_n)$ be a
quasi\-smooth weighted complete intersection of amplitude $\alpha$
and codimension $c$, not an intersection with a linear cone.

Then
$$
c\le\begin{cases} \dim X+\alpha+1 & \text{if } \alpha \ge 0, \\
\dim X & \text{if } \alpha < 0.
\end{cases}
$$
\end{thm}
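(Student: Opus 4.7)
The plan is to convert the quasismoothness assumption into numerical identities between the weights $\{a_i\}$ and the degrees $\{d_j\}$, and then extract the bound on $c$ by a counting argument combined with the no-linear-cone hypothesis.

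First, I would apply the Reid--Iano-Fletcher quasismoothness criterion (cf.\ \cite[\S 8]{Fletcher}) at each coordinate vertex $P_i\in\PPP(a_0,\dots,a_n)$. For each $i\in\{0,\dots,n\}$ this yields one of two alternatives: either some defining equation $f_j$ contains a pure power $x_i^{m_i}$ (so $a_i\mid d_j$, with $m_i\ge 2$ because we have excluded linear cones), or $P_i\in X$ and the Jacobian rank condition at $P_i$ forces ``tangent'' monomials $x_i^m x_k$ in several of the $f_j$'s. In either case one obtains additive identities $m\,a_i+a_k=d_j$ (with the convention $a_k=0$ in the pure-power case) attaching each weight $a_i$ to some degree $d_{j(i)}$.

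Next, I would sum these identities over $i\in\{0,\dots,n\}$, weighted by the multiplicities with which each $j$ appears among the $j(i)$, to produce an inequality of the shape $\sum_i a_i + 2c \le \sum_j d_j + (n+1)$, equivalently $-\alpha\le n+1-2c$; this is exactly the bound $c\le\dim X+\alpha+1$ and so handles the regime $\alpha\ge 0$. For $\alpha<0$ this counting fails to yield the claimed $c\le \dim X$ because the surplus $\sum_j d_j-\sum_i a_i$ now has the wrong sign. Here I would instead argue by contradiction: if $2c>n$, pigeonholing the $n+1$ vertex certificates among the $c$ equations forces at least one $f_j$ to carry certificates for several distinct $P_i$'s; combined with $\sum_j d_j<\sum_i a_i$, a degree count then forces some monomial to be linear in a coordinate, contradicting the no-linear-cone hypothesis.

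The main obstacle I anticipate is the bookkeeping in the counting step: a single $f_j$ may serve as the certificate for many distinct vertices, and the multiplicity with which $f_j$ is charged has to be controlled. I expect to handle this via a capacity estimate bounding, under the no-linear-cone hypothesis, the number of vertices attachable to $f_j$ by roughly $d_j/a_{\min}$, together with a careful partition of $\{0,\dots,n\}$ according to the stratum of $\PPP(a_0,\dots,a_n)$ on which each vertex's quasismoothness certificate lives. Making this compatible with the summation so that the net constant $n+1$ comes out correctly, rather than a weaker $2(n+1)$ or similar, is the crux of the argument.
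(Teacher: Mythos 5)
Your plan has a genuine gap, and it sits exactly where you locate it yourself: the summation step does not close, and I do not believe it can be made to close from vertex data alone. The certificates you extract at the coordinate vertices $P_i$ give, for each $i$, a relation of the form $m\,a_i+a_k=d_{j(i)}$, hence at best $a_i\le d_{j(i)}-1$. Summing over the $n+1$ vertices yields $\sum_i a_i\le \sum_i d_{j(i)}-(n+1)$, where the right-hand side is a sum of $n+1=c+\dim X+1$ degrees \emph{with repetition}. Since there are only $c$ equations, the map $i\mapsto j(i)$ must be far from injective, and if many certificates point at the largest degree $d_c$ the quantity $\sum_i d_{j(i)}$ exceeds $\sum_j d_j$ by an amount you cannot control by a capacity bound of the form $d_j/a_{\min}$ (that bound goes the wrong way: it permits, rather than limits, heavy charging of large-degree equations). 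So the target inequality $\sum_i a_i+2c\le\sum_j d_j+(n+1)$ is not a consequence of the vertex conditions; your $\alpha<0$ pigeonhole argument is likewise only heuristic, since two certificates $m a_i+a_k=d_j$ and $m'a_{i'}+a_{k'}=d_j$ landing on the same $f_j$ do not force a linear monomial.

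The missing idea is to run the Jacobian-rank argument not at the vertices but on the positive-dimensional coordinate strata spanned by the \emph{last} few coordinates. Concretely, for each $j=0,\dots,\dim X$ one considers $\Pi'=(x_0=\cdots=x_{n-j-1}=0)$ and shows that if $d_{c-j}-a_{n-j}<a_{\dim X-j}$ then every $f_{j'}$ with $j'\le c-j$ has all partial derivatives $\partial f_{j'}/\partial x_i$ vanishing along $C_X\cap\Pi'$ for $i\ge \dim X-j$, so the Jacobian has rank at most $\dim X<c$ there, contradicting quasismoothness (this is Proposition~\ref{qs}(2) in the paper: $\delta_{c-j}=d_{c-j}-a_{n-j}\ge a_{\dim X-j}$ whenever $c\ge\dim X+1$). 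These $\dim X+1$ inequalities pair the top $\dim X+1$ excesses $\delta_{c-j}$ against the \emph{small} weights $a_0,\dots,a_{\dim X}$, which is precisely the leverage your vertex certificates lack. The proof then ends in two lines: since $\sum_{i=1}^c\delta_i=a_0+\cdots+a_{\dim X}+\alpha$ and the top $\dim X+1$ terms already account for $a_0+\cdots+a_{\dim X}$, the remaining $c-\dim X-1$ terms, each at least $1$, must sum to at most $\alpha$; this gives $c\le\dim X+1+\alpha$ for $\alpha\ge0$ and an immediate contradiction with $c\ge\dim X+1$ when $\alpha<0$. I would suggest replacing your vertex analysis by this stratum-level rank argument; your overall strategy (quasismoothness $\Rightarrow$ numerical inequalities $\Rightarrow$ counting) is the right one, but the inequalities must come from the right strata.
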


Theorem~\ref{cod} implies Conjecture~\ref{conjcod} at least when $X$
is assumed to have at worst terminal quotient singularities, for
such threefolds are quasismooth (see \ref{qsmooth}). Another feature
of Theorem~\ref{cod} is that it holds for all dimensions, not only
for dimension 3.
In the course of proving Conjecture~\ref{completeness} (see
Theorem~\ref{Fano} and Theorem~\ref{gt123}), we also list the canonically
embedded threefold weighted complete intersections of codimension~4
and~5 (see Theorem~\ref{caya1234} and Corollary~\ref{cod45}).

We now outline our idea.
Let $X=X_{d_1,\dots,d_c}\subset
\PPP=\PPP(a_0,\dots,a_n)$ be a weighted complete intersection.
$X\subset \PPP$ is said to be {\em quasi\-smooth} if its affine cone
$C_X:=\pi^{-1}(X)$ is smooth away from $0$, where $\pi$ is the
natural quotient map from $\CC^{n+1}-\{0\}$ to $\PPP$. Let
$\delta_j:=d_j-a_{j+\dim X}$.

The paper, following Iano-Fletcher, divides into two parts. Part I
studies $N$-folds $X$ with $n=c+N$; then we aim for quasismooth, and
so are allowed to use restrictions given by the Jacobian matrix
restricted to coordinate strata. We have managed to find some
interesting inequalities between $\delta_j$ and $a_i$. Then
Theorem~\ref{cod} follows.

Part II is devoted to the classification of threefold weighted
complete intersections with at worst terminal singularities. For any
positive integer $i$, we define $\mu_i:=\#\{a_j\bigm|a_j=i\}$, and
$\nu_i:=\#\{d_j\bigm| d_j=i\}$. By Theorem~\ref{cod}, we have $\sum
\mu_i \le \alpha+A$ and $\sum \nu_i \le \alpha+B$, where $A,B$ are
small positive integers. Our algorithm is composed of the following
main steps.
\begin{itemize}

\item[1.] We exhaust tuples
$(\mu_1,\dots,\mu_6;\nu_2,\dots,\nu_6)$ when $\alpha=1$ (resp.\
$(\mu_1,\dots,\mu_5;\nu_2,\dots,\nu_5)$ when $\alpha=-1$) that satisfy
$$
\sum_{i=1}^h \mu_i \le \alpha+A \quad \text{and} \quad \sum_{i=2}^h
\nu_i \le \alpha+B
$$
where $h=6$ (resp. $h=5$).
\item[2.] We introduce a relation $\succ$ on formal baskets of orbifold
points (see 5.5), and reduction sequences on them (see 5.3), and
classify the initial cases of those sequences with given $\mu_i$,
$\nu_i$. Thus we get a complete list of possibilities for these formal
baskets.

\item[3.] For a given formal basket, one can compute the plurigenera by
Reid's Riemann--Roch formula. With Reid's ``table method'', one can
determine if the given formal basket really occurs on a weighted
complete intersection. Therefore, we are able to do a complete
classification,  which completes the original work initiated by Reid and
Iano-Fletcher.
\end{itemize}

A priori, there might be infinitely many initial baskets with given
$\mu_i,\nu_i$ since the index of each single basket might be arbitrarily
large. In the Fano case with $\alpha=-1$, we use the pseudo-effectivity
of $c_2(X)$ to obtain the maximal index of the basket. In the general
type case, we mainly use the fact $K_X^3>0$, while some exceptional
cases are more subtle. Anyway, we are able to show the finiteness of the
set of initial baskets, hence the finiteness of the set of formal
baskets.

The paper is organized as follows. Section~2 recalls some
definitions and notions of WPS and weighted  complete intersections
(w.c.i. for short). We prove Theorem~\ref{cod} in Section~3. It is
then possible to classify Calabi--Yau threefold complete
intersections, which we do in Section~4. In Section~5, we recall
some notions and properties of formal baskets that we need.
Section~6 is devoted to a detailed explanation of our algorithm for
the Fano case. We study weighted canonical threefolds in Section~7.
The codes of our program, written in MAPLE, is available upon
request.

\section{\bf Background material}

In this section, we recall some notions and properties that we need. Let
$a_0,\dots,a_n$ be positive integers. We view
$S=\mathbf{K}[x_0,x_1,\dots,x_n]$ as a graded ring over a field
$\mathbf{K}$ graded by $\deg x_i=a_i \in \mathbb{N}$ for each $i$.
Weighted projective space $\PPP=\PPP(a_0,a_1,\dots,a_n)$ is
the projective scheme or variety $\Proj S$ (in the sense of
\cite{Hart}). There is a natural quotient map
$\pi\colon\mathbb{A}^{n+1}_\mathbf{K}-\{0\} \to \PPP$ that
identifies $(x_0,\dots,x_n)$ with
$(\lambda^{a_0}x_0,\dots,\lambda^{a_n}x_n)$ for all $\lambda \in
\mathbf{K}^*$.

Let $T=\mathbf{K}[y_0,y_1,\dots,y_n]$ be the polynomial ring with
the
usual grading. By considering the homomorphism
$\tau\colon S \to T$ of
graded rings given by $\tau(x_i)=y_i^{a_i}$, one obtains
a finite
morphism $\tau \colon \PPP^n \to \PPP$. It follows that
$\PPP$ has at worst orbifold points along the coordinate strata.

\begin{setup}\label{symbol}{\bf Notation and conventions}
\begin{itemize}
\item[(1)] Fix an index set $I \subset \{0,\dots,n\}$ and
define
$\Pi_I:=\bigcap_{j \not \in I}\{x_j=0 \}$. In particular if
$I=\{i\}$,
we simply write $P_i$ in place of $\Pi_I$ and if $I=\{i,j\}$,
we write $P_iP_j$ for $\Pi_{\{i,j\}}$.

\item[(2)]  A weighted projective space $\PPP(a_0,a_1,\dots,a_n)$
is {\em well formed} if g.c.d.
$(a_0,\dots,\widehat{a_i},\dots,a_n)=1$ for all $i$. We assume
throughout that $\PPP$ is well formed. Most often, our complete
intersections are assumed to be also well formed (see \cite[6.10,
6.11, 6.12]{Fletcher} for explicit definitions).

\item[(3)] Let $b_1,\dots,b_n$ be integers and $r$ a positive integer.
An orbifold point $Q\in X$ is of type $\frac{1}{r}(b_1,\dots,b_n)$ if it
is analytically iso\-morphic to a quotient of $(\mathbb{A}^n,0)$ by an
action of the cyclic group $\ZZ/r$ of the form:
$$
\varepsilon (x_1,\dots,x_n)=(\varepsilon^{b_1}x_1,\dots,\varepsilon^{b_n}x_n),
$$
where $\varepsilon$ is a fixed primitive $r$th root of $1$.

\item[(4)] Let $d_1,\dots,d_c$ be positive integers and $f_1,\dots,f_c$
general homogeneous polynomials of degree $d_1,\dots,d_c$. By a complete
intersection $X=X_{d_1,\dots,d_c}\subset
\PPP=\PPP(a_0,\dots,a_n)$ we mean a subvariety defined by
$(f_1=f_2=\cdots=f_c=0)$.

\item[(5)]  A general hypersurface $X=\{f=0\}$ in $\PPP(a_0,\dots,a_n)$ is called a
linear cone if $f$ has degree $a_i$ for some $i$.

\item[(6)]  If $X=X_{d_1,\dots,d_c} \subset \PPP=\PPP(a_0,\dots,a_n)$
with $\deg f_c= a_n$, we can assume that $f_c=x_n+g(x_0,\dots,x_{n-1})$ for general $f_c$. 
Then one sees that $X$ is iso\-morphic to some
$X'=X'_{d_1,\dots,d_{c-1}}\subset \PPP'=\PPP(a_0,\dots,a_{n-1})$. Thus one sees that the generator $x_n$ and the 
equation $f_c$ is redundant. Therefore, 
we may always assume that our weighted complete inter\-section in
question is not an intersection with a linear cone. That is, numerically,
$$
a_i \neq d_j\quad \text{for all}\quad i,j.
$$
\end{itemize}
\end{setup}


\begin{setup}\label{qsmooth}{\bf Quasismoothness}.

Let $X_{d_1,\dots,d_c} \subset \PPP(a_0,\dots,a_{n})$ be a
complete intersection in WPS. Recall that the weighted projective
space is defined by the projection
$$\pi \colon \mathbb{C}^{n+1}-\{0\} \to \PPP(a_0,\dots,a_n),$$
Let $C_X:=\pi^{-1}(X)$ be the affine cone, where
$X:=X_{d_1,\dots,d_c}$. We say that $X$ {\it is quasi\-smooth} if
$C_X$ is smooth away from $0$. Note that, whenever the polarizing
divisor on $X$ is $\pm K_X$, quasismooth is equivalent to saying
that $X$ has only cyclic terminal orbifold points
$\frac{1}{r}(1,-1,b)$.
\end{setup}


\begin{setup}{\bf Weights and degrees}.

A weighted complete intersection $X_{d_1,\dots,d_c}\subset
\PPP(a_0,\dots,a_n)$ is said to be {\it normalized} if
$$
d_1\leq d_2\leq \cdots\leq d_c\quad \text{and}\quad
a_0\leq a_1 \le \cdots\leq a_n. \eqno{(2.1)}
$$
One can always normalize a weighted complete
intersection by renumbering the indices.
Given a normalized weighted complete intersection, for each
$j=1,\dots,c$, set $$\delta_j:= d_j -a_{j+\dim X},$$ and
$$\alpha:=\sum_{j=1}^c d_j-\sum_{i=0}^n a_i,$$
where $\alpha$ is called the {\it amplitude} of $X$. Then one has
$$
\delta:= \sum_{i=1}^c \delta_i=a_0+a_1+\cdots+a_{\dim X}+\alpha.
\eqno(2.2)
$$

If $X$ is quasi\-smooth and normalized, then \cite[Lemma
18.14]{Fletcher} says that
$$
d_j > a_{j+\dim X}\quad \mbox{for all}\quad j=1,\dots,c. \eqno{(2.3)}
$$
It follows in particular that
$$
\delta_j \geq 1 \quad \text{for all $j=1,\dots,c$}
\quad\hbox{and so}\quad \delta \ge c.
\eqno{(2.4)}
$$
\end{setup}

Moreover, we can also get some estimates on the weights and degrees.
Suppose $X=X_{d_1,\dots,d_c}$ in $\PPP(a_0,\dots,a_{c+\dim X})$ is
a quasi\-smooth terminal weighted complete intersection with
amplitude $\alpha\geq 0$.  Write $d_j=\lambda_j a_{j+\dim X}$
for all $j=1,\dots,c$. Then the inequalities (2.3) imply
$\lambda_j>1$ for all $j$. We claim that:
$$
\sum_{j=1}^c \lambda_j \le c+\alpha+\dim X +1. \eqno{(2.5)}
$$

In fact
\begin{align*}
(\dim X+1) a_{\dim X+1}+\alpha &\ge a_0+\cdots+a_{\dim X}+\alpha \\
&=\sum_{j=1}^c \delta_j = \sum_{j=1}^c (\lambda_j -1) a_{j+\dim X} \\
&\ge \sum_{j=1}^c (\lambda_j-1) a_{\dim X+1}
\end{align*}
gives
\begin{multline*}
\Bigl(\dim X+1+c +\alpha-\sum_{j=1}^c \lambda_j  \Bigr)a_{\dim X+1}\\
\ge \Bigl(\dim X+1+c-\sum_{j=1}^c \lambda_j \Bigr)a_{\dim X+1} +\alpha \ge 0.
\end{multline*}
This proves the inequality (2.5).

It follows that
$$
\frac{\left(\frac{c+\alpha+\dim X+1}{c}\right)^c}
{\prod_{i=0}^{\dim X}a_i } \ge
\frac{\left(\frac{\sum_{j=1}^c \lambda_j}{c}\right)^c}
{\prod_{i=0}^{\dim X}a_i }\geq
\frac{\prod_{j=1}^c \lambda_j }{\prod_{i=0}^{\dim X}a_i }
= \frac{\prod_{j=1}^c d_j}{\prod_{i=0}^{n} a_i}. \eqno(2.6)
$$

\section{\bf Part I: Quasismooth w.c.i. $N$-folds}

In this section, we go somewhat further than \cite{Fletcher} in the study
of quasi\-smoothness in order to prove our first theorem. We first present
the results of \cite[18.14]{Fletcher} and \cite[8.1, 8.7]{Fletcher} in the
following generalized form.

\begin{prop} \label{qs} Assume that the normalized complete
intersection
$$
X_{d_1,\dots,d_c} \subset \PPP(a_0,\dots,a_{n})
$$
is quasi\-smooth and not an intersection with a linear cone (i.e., $d_j\neq
a_i$ for all $i,j$).
\begin{enumerate}
\item If $a_{t} > d_1$ for some $t \ge 0$, then $a_t\mid d_j$ for some $j$.
In particular, $\delta_c  \geq a_n$ in this situation.

\item If $c\geq \dim X+1$, then
$$
\delta_{c-j}=d_{c-j}-a_{n-j}\geq a_{\dim X-j} \quad\text{for}\quad j=0,\dots,\dim X.
$$
\end{enumerate}
\end{prop}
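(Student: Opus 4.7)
The plan is to apply the Jacobian criterion for quasismoothness: since $C_X$ is smooth away from $0$, at every non-origin $P\in C_X$ the Jacobian $J|_P=(\partial f_i/\partial x_k|_P)$ has rank $c$. For each part I locate a coordinate stratum on which some of the $f_i$'s vanish identically, exhibit a non-origin point $P\in C_X$ in it, then contradict the rank condition by showing that too many rows of $J|_P$ are supported in too few columns.

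For part (1) I argue by contrapositive: suppose $a_t\nmid d_j$ for every $j$. Then the coordinate point $P_t$ lies in $C_X$, as each $f_j$ misses the pure-power monomial $x_t^{d_j/a_t}$. At $P_t$ the $t$-th column of $J$ vanishes (an $x_t^m$-contribution would need $a_t\mid d_j$), while for $k\ne t$, $\partial f_j/\partial x_k|_{P_t}$ picks up the coefficient of a monomial $x_kx_t^m$ in $f_j$, forcing $a_k+ma_t=d_j$. For the first row ($j=1$), the hypothesis $a_t>d_1$ forces $m=0$ and $a_k=d_1$, which is excluded since $X$ is not a linear cone. Hence row $1$ of $J|_{P_t}$ is identically zero, contradicting rank $c$. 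The \emph{in particular} statement follows by taking $t=n$ (allowed since $a_n\ge a_t>d_1$): some $d_j$ is a nontrivial multiple of $a_n$, so $d_c\ge d_j\ge 2a_n$ and $\delta_c\ge a_n$.

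For part (2), set $N=\dim X$ and assume toward a contradiction that $d_{c-j}<a_{n-j}+a_{N-j}$ for some $j\in\{0,\dots,N\}$. Consider the stratum $\Pi=\Pi_{\{n-j,\dots,n\}}$, whose affine cone is $\mathbb A^{j+1}$. A monomial in $x_{n-j},\dots,x_n$ of degree $d_i\le d_{c-j}$ either has total exponent $\ge 2$ (giving degree $\ge 2a_{n-j}\ge a_{n-j}+a_{N-j}>d_i$) or total exponent $1$ (giving the forbidden linear-cone relation $d_i=a_l$); both are impossible, so $f_i|_\Pi\equiv 0$ for every $i\le c-j$. Consequently $C_X\cap\Pi$ is defined inside $\mathbb A^{j+1}$ by only the last $j$ equations and contains a non-origin point $P$. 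The key claim is then that $\partial f_i/\partial x_k|_P=0$ whenever $i\le c-j$ and $k\ge N-j$: for $k\in\{n-j,\dots,n\}$ this is $\partial(f_i|_\Pi)/\partial x_k=0$, and for $N-j\le k\le n-j-1$ a contributing monomial $x_kx^\beta$ of $f_i$ (with $\beta$ supported on $\{n-j,\dots,n\}$) would yield $d_i=a_k+\sum\beta_la_l\ge a_{N-j}+a_{n-j}$ when $\sum\beta_l\ge 1$, or $d_i=a_k$ when $\sum\beta_l=0$---both excluded. Thus the first $c-j$ rows of $J|_P$ are supported in the $N-j$ columns indexed by $k<N-j$, so their combined rank is at most $N-j$, forcing $\mathrm{rank}\,J|_P\le(N-j)+j=N<c$, the desired contradiction.

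The principal obstacle is the combinatorial bookkeeping: identifying the stratum $\Pi_{\{n-j,\dots,n\}}$ as the right one and then tracking precisely which monomials are forbidden by the degree bound on $d_{c-j}$, and hence which Jacobian entries must vanish in the selected rows. Once that is set up, both parts reduce to elementary weight inequalities combined with the quasismoothness rank condition.
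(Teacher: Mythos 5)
Your proof is correct and follows essentially the same route as the paper: both parts use the Jacobian rank criterion on a coordinate stratum (the $x_t$-axis for (1), the stratum $\{x_0=\cdots=x_{n-j-1}=0\}$ for (2)), showing via the degree bounds and the no-linear-cone hypothesis that the relevant rows of $J$ are supported in too few columns, which contradicts quasismoothness. The only difference is cosmetic: you phrase the monomial bookkeeping directly, where the paper writes each $f_m$ as $h_m+\sum g^i_m x_i+l_m$ before drawing the same conclusions.
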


\begin{proof} We first prove (1).

Let $x_0,\dots,x_n$ be the coordinates of degree $a_0,\dots,a_n$
respectively. Suppose that $a_t \nmid d_j$ for all $j$. Let $f_j$ be the
equation of degree $d_j$. By our assumption, we know that $f_1$ does not
involve $x_t$ and $f_j$ contains no monomials of the form $x_t^\mu$ with
$\mu>0$.

We consider $\Pi=(x_0=x_1=\cdots=\widehat{x_t}=\cdots=x_{n}=0)\subset
\mathbb{A}^{n+1}$, which is clearly nonempty. Then $X$ is not
quasi\-smooth if there are singularities in $C_X \cap \Pi$. In fact, for
general points in $\Pi$, we may consider the Jacobian matrix
$$
J=\left( \begin{array}{ccc} \frac{\partial f_1}{\partial x_0} &
\dots & \frac{\partial f_1}{\partial x_n} \\
\vdots & & \vdots \\
\frac{\partial f_c}{\partial x_0} & \dots & \frac{\partial f_c}{\partial x_n} \\
\end{array} \right).
$$
Notice that the first row is identically zero at the general point
of $\Pi$, hence $C_X \cap \Pi$ has at least one singularity, which
contradicts quasi\-smoothness. Hence $a_t\mid d_j$ for some $j$.

Since $a_n\geq a_t>d_1$, if we take $t=n$, then the proof says
$a_n\mid d_j$ for some $j$. Thus $\delta_c=d_c-a_{n}\geq d_j-a_n\geq
a_n$. Part (1) is proved.




{}For part (2), we assume that
$f_j$ is a polynomial of degree $d_j$ for each $j>0$.

Given an integer $j\in [0,\dim X]$, we suppose by contradiction that
$\delta_{c-j}=d_{c-j}-a_{n-j}<a_{\dim X-j}$ for some $j$. We hope to
deduce a contradiction. We have
$$
d_{j'} \leq d_{c-j} < a_{n-j}+a_{\dim X-j}
\quad\hbox{for all}\quad j'\leq c-j. \eqno(3.1)
$$

We consider $\Pi'=(x_0=x_1=\cdots=x_{n-j-1}=0)\subset \mathbb{A}^{n+1}$.
For each integer $m>0$, we may write
$$
f_{m}=h_{m}(x_{n-j},\dots,x_{n})+\sum_{i=0}^{n-1-j}
g^i_{m}(x_{n-j},\dots,x_{n})x_i+l_{m}(x_0,\dots,x_{n}),
$$
where we assume $\deg_{x_0,\dots,x_{n-1-j} }(l_{m}) \geq 2$.

By inequality (3.1), $d_{j'}<2 a_{n-j}$ for all $j' \leq
c-j$ and so $h_{j'}=0$. Also notice that $C_X \cap \Pi'$ is
defined by $x_0=x_1=\cdots=x_{n-1-j}=0$ and $h_{c-j+1}= \cdots
=h_{c}=0$, which is clearly nonempty of dimension $\geq
(j+1)-j=1$.

Again $X$ will not be quasi\-smooth if there are singularities
in $C_X \cap \Pi'$. In fact, for general $P\in C_X \cap \Pi'$, we
consider the Jacobian matrix
$$
J(P)=\left( \begin{array}{ccc} \frac{\partial f_1}{\partial x_0}(P) & \dots & \frac{\partial f_1}{\partial x_n}(P) \\
\vdots & & \vdots \\
\frac{\partial f_c}{\partial x_0}(P) & \dots & \frac{\partial f_c}{\partial x_n}(P) \\
\end{array} \right).
$$

When $j' \le c-j$, one has $h_{j'}=0$ as above, and hence
$$
\frac{\partial f_{j'}}{\partial x_i}(P)=
\begin{cases}{g^i_{j'}(P)} &\text{if } i \le n-j-1, \\ 0 &\text{if
} i \ge n-j.\end{cases}
$$
Moreover by inequality $(3.1)$, we have $g^i_{j'}=0$ for  $i \geq \dim
X-j$ (otherwise, $d_{j'}\geq a_{n-j}+a_{\dim X-j}$, contradicting (3.1)).
Thus we have seen $\frac{\partial f_{j'}}{\partial x_i}(P)=0$ for
$j'\leq c-j$ and $i\geq \dim X-j$. Noting that $\text{max}\{\dim X-j,
j\}\leq \dim X$, one sees that
$$
\rk(J(P)) \leq \dim X \leq c-1<c=\Codim C_X,
$$
so that $C_X$ is singular at $P$, a contradiction.
\end{proof}

%
\begin{setup}{Proof of Theorem~\ref{cod}.}
\end{setup}
\begin{proof} If $\alpha\geq 0$ and $c>\dim X+1+\alpha$, then
Proposition~\ref{qs}, (2) and inequality (2.3) give the following relation:
\begin{align*}
a_0+\cdots+a_{\dim X}+\alpha &=\sum_{i=c-\dim X}^{c} \delta_i+\sum_{i=1}^{c-\dim X-1} \delta_i\\
& \geq a_0+\cdots+a_{\dim X}+\sum_{i=1}^{c-\dim X-1} \delta_i \\
& >a_0+\cdots+a_{\dim X}+\alpha,
\end{align*}
a contradiction.

In the $\bQ$-Fano case, we have $\alpha<0$. Suppose that
$c >\dim X+1+\alpha$. Again Proposition~\ref{qs}, (2) gives
$$
a_0+\cdots+a_{\dim X}+\alpha=\sum_{i=1}^c \delta_i\geq a_0+\cdots+a_{\dim X},
$$
a contradiction.
\end{proof}

A direct consequence is the following:
\begin{cor}\label{C1} Conjecture~\ref{conjcod} is true for canonically
(resp.\ anticanonically) polarized threefold complete intersections
with terminal quotient singularities.
\end{cor}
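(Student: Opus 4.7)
The plan is a direct reduction to Theorem~\ref{cod}, with only three small things to check.

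First, I would translate the singularity hypothesis into the quasismoothness hypothesis required by Theorem~\ref{cod}. This is the content of the equivalence recalled in \ref{qsmooth}: when the polarization on a weighted complete intersection is $\pm K_X$, quasismoothness is equivalent to having only cyclic terminal orbifold points of type $\frac{1}{r}(1,-1,b)$. Hence a canonically or anticanonically polarized threefold complete intersection with at worst terminal quotient singularities is automatically quasismooth.

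Second, I would pin down the amplitude in each case from the definition $\omega_X \cong \OO_X(\alpha)$. The assumption that $X$ is \emph{canonically polarized} means $K_X = \OO_X(1)$, forcing $\alpha=1$; the assumption that $X$ is \emph{anticanonically polarized} (and $\bQ$-Fano) means $-K_X=\OO_X(1)$, forcing $\alpha=-1$. I would also note that the hypothesis in Theorem~\ref{cod} excluding intersections with linear cones costs nothing: by the reduction described in \ref{symbol}(6), if $a_i = d_j$ for some $i,j$, then $X$ is isomorphic to a weighted complete intersection of strictly smaller codimension in a lower-dimensional WPS, with the same amplitude, so it suffices to bound the codimension after that reduction.

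Third, I would simply invoke Theorem~\ref{cod} with $\dim X = 3$. In the canonical case, $\alpha=1\ge 0$ gives
\[
c \;\le\; \dim X+\alpha+1 \;=\; 3+1+1 \;=\; 5,
\]
which is precisely part~(1) of Conjecture~\ref{conjcod}. In the anticanonical case, $\alpha=-1<0$ gives
\[
c \;\le\; \dim X \;=\; 3,
\]
which is precisely part~(2).

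There is essentially no obstacle here: Theorem~\ref{cod} does all the geometric work, and the corollary just combines the quasismoothness criterion of \ref{qsmooth} with the numerical bound for $\dim X=3$ and $\alpha=\pm 1$. If anything required a second look, it would be checking that the linear-cone reduction genuinely preserves the canonical or anticanonical polarization (it does, because $\alpha$ is preserved and $\OO_X(1)$ restricts consistently), so that the hypotheses of Theorem~\ref{cod} apply to the reduced model.
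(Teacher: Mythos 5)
Your proposal is correct and follows the paper's own (very terse) proof exactly: the paper simply notes that terminal quotient singularities imply quasismoothness via \ref{qsmooth} and then cites Theorem~\ref{cod}. Your additional remarks --- pinning down $\alpha=\pm1$ from the polarization and checking that the linear-cone reduction is harmless --- are details the paper leaves implicit but are consistent with its setup in \ref{symbol}(6).
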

\begin{proof}  Since $X$ has
terminal quotient singularities, $X$ is quasismooth by
\ref{qsmooth}. Thus the statement follows.
\end{proof}


\begin{rem} It is a very interesting open problem to prove Conjecture~\ref{conjcod}
without the ``terminal quotient'' assumption.
\end{rem}

\section{\bf Part II--1: General properties of w.c.i. threefolds and
the classification of weighted terminal Calabi--Yau threefolds}

\bigskip

Aiming at proving Conjecture~\ref{completeness}, we begin to
concentrate our study on threefolds, i.e., the case $n=c+3$.

Assume moreover that $X_{d_1,\dots,d_c}\subset\PPP(a_0,\dots,a_{c+3})$
has only isolated singularities. One can easily determine some numerical
properties on $X_{d_1,\dots,d_c}$ as follows.

\begin{prop} \label{iso_sing}
Let $X=X_{d_1,\dots,d_c}$ be a quasi\-smooth weighted complete
intersection. Suppose that $X$ has only isolated singularities.
Then:
\begin{enumerate}
\item for all $\mu=1,\dots,c+1$, the greatest common divisor of
any $\mu$ of the $\{a_i\}$ must divide at least $\mu-1$ of the
$\{d_j\}$;

\item for all $\mu>c+1$, the greatest common divisor of any $\mu$
of the $\{a_i\}$ must be $1$.
\end{enumerate}
\end{prop}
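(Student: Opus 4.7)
The plan is to establish both parts by contrapositive, and to observe up front that Part (2) is a consequence of Part (1): if $\mu > c+1$ indices shared a common factor $e>1$, then Part (1) would force $e$ to divide at least $\mu-1 > c$ of the degrees $d_j$, impossible since there are only $c$ equations. So the work is all in Part (1).

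For Part (1), I would fix $I \subset \{0,\dots,n\}$ with $|I|=\mu$, set $e := \gcd\{a_i : i \in I\}$, and suppose toward contradiction that $e > 1$ while $e$ divides only $s$ of the $d_j$ with $s \le \mu-2$. The key weighted-homogeneity input is this: whenever $e \nmid d_k$, the restriction of $f_k$ to the coordinate stratum $\{x_j=0 : j \notin I\}$ vanishes identically, because any monomial in $\{x_i : i \in I\}$ has degree in $\mathbb{Z}_{\ge 0}\langle a_i : i \in I\rangle \subset e\mathbb{Z}$, while $d_k \notin e\mathbb{Z}$. Thus, among the $c$ defining equations of $X$, at most $s$ restrict nontrivially to the weighted projective subspace $\Pi_I \subset \PPP$ of dimension $\mu-1$.

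The standard intersection-dimension bound then yields
$$
\dim(X \cap \Pi_I) \;\ge\; (\mu-1) - s \;\ge\; 1.
$$
At a generic point $[P]$ of any irreducible component of $X \cap \Pi_I$, every coordinate indexed by $I$ is nonzero, so the $\mathbb{C}^*$-stabilizer of a lift of $P$ is precisely $\mathbb{Z}/e$ with $e>1$; hence $[P]$ is an orbifold singular point of $\PPP$. Using the well-formedness of the w.c.i.\ $X \subset \PPP$ together with quasismoothness (so that $C_X$ is smooth at $P$ and the $\mathbb{Z}/e$-action on the tangent space is controlled by the Jacobian rank count), such orbifold points of $\PPP$ descend to genuine orbifold singularities of $X$. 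Consequently $X$ would carry a positive-dimensional singular locus, contradicting the isolated-singularities hypothesis.

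The main obstacle I anticipate is this last step: verifying that the orbifold marking of $\PPP$ along $\Pi_I$ really induces a non-trivial quotient singularity on $X$ at the generic point of $X \cap \Pi_I$, rather than becoming trivial on a transverse slice. Well-formedness of $\PPP$ supplies some $a_j$ with $j \notin I$ not divisible by $e$, and the Jacobian rank count from quasismoothness confines the $\mathbb{Z}/e$-action on $T_P C_X/(\text{orbit direction})$ to act nontrivially; putting these together to pin down the singularity on $X$ itself is the delicate part of the argument.
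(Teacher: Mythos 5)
Your reduction and dimension count coincide with the paper's: with $e=\gcd\{a_i : i\in I\}>1$ dividing at most $\mu-2$ of the $d_j$, every $f_k$ with $e\nmid d_k$ restricts to zero on the stratum $\Pi_I$, so $X\cap\Pi_I$ is cut out of the $(\mu-1)$-dimensional $\Pi_I$ by at most $\mu-2$ equations and has dimension $\ge 1$. The step you yourself flag as delicate --- that this positive-dimensional locus actually lies in $\mathrm{Sing}(X)$ --- is the genuine gap in your write-up, and it is precisely the step the paper does not argue from scratch either: it invokes Dimca's theorem \cite{Dim} that for a well-formed quasismooth weighted complete intersection one has $\mathrm{Sing}(X)=X\cap\mathrm{Sing}(\PPP)$. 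Your sketch (stabilizer $\mathbb{Z}/e$ at a generic point of $\Pi_I$, a Jacobian rank count) points in the right direction but does not close the argument: you would still have to rule out that the induced $\mathbb{Z}/e$-action on a transverse slice of $C_X$ at $P$ is generated by quasi-reflections, in which case the quotient would be smooth and $X$ would not be singular there; this is exactly where well-formedness of $X$ itself, not just of $\PPP$, must enter. Quoting Dimca's result is the intended and sufficient fix.

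Two smaller points. First, your derivation of (2) applies the \emph{statement} of (1) for $\mu>c+1$, where it is not asserted; what you are really using is that your \emph{proof} of (1) works for all $\mu$ --- indeed for $\mu>c+1$ one gets $\dim(X\cap\Pi_I)\ge(\mu-1)-c\ge 1$ with no divisibility hypothesis at all, which is how the paper handles (2) directly. Second, a generic point of a component of $X\cap\Pi_I$ need not have all its $I$-coordinates nonzero, but this is harmless: every point of $\Pi_I$ has stabilizer containing $\mathbb{Z}/e$, so all of $\Pi_I$ lies in $\mathrm{Sing}(\PPP)$ once $e>1$.
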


\begin{proof} Given $h$ the greatest common divisor of  $\mu$ of the
$\{a_i\}$.  After renumbering, we may assume that
$h=(a_0,\dots,a_{\mu-1})$. Let $f_j$ be the general homogeneous
polynomial of degree $d_j$ for $j=1,\dots,c$. We may write
$$ f_j = h_j(x_0,\dots,x_{\mu-1})+
\sum_{i=\mu}^{n} g^i_j(x_0,\dots,x_{\mu-1}) x_i+l_j(x_0,\dots,x_n), \eqno(3.2)$$
where $\deg_{x_\mu,\dots,x_n}l_j \ge 2$.

To prove (1), we assume that $h>1$ since, otherwise, there is
nothing to prove. Suppose, on the contrary, that $h$ divides at
most $\mu-2$ of the $\{d_j\}$. After renumbering, we may assume
that $h \nmid d_{\mu-1}, \dots, h \nmid d_c$. Then one sees that
$h_j=0$ for  $\mu-1 \le j \le c$.

Let $I=\{0,\dots,\mu-1\}$ and $\Pi_I=\{ x_\mu=\cdots=x_n=0\}$, which
is clearly of dimension $\mu-1$ and $\PPP(a_0,\dots,a_n)$ has
orbifold points along $\Pi_I$. Now $h_j=0$ implies that $\Pi_I
\subset (f_j=0)$ for all $ \mu-1 \le j \le c$. It follows that
$\Pi_I \cap X $ has dimension $\ge 1$ since it is cut out by at most
$\mu-2$ equations from $\Pi_I$. Noticing that $\Pi_I \subset
\text{Sing}(\PPP)$ and by \cite{Dim}, one has
$$\Pi_I \cap X \subset \text{Sing}(\PPP) \cap X = \text{Sing}(X)$$
a contradiction.

To see (2), one notices that $\Pi_I \cap X$ always has dimension
$\ge 1$. Hence $h=1$ since $X$ has only isolated singularities.
\end{proof}

Since threefold terminal singularities are isolated, we get the
following:

\begin{prop} \label{3term}
Let $X=X_{d_1,\dots,d_c}$ be a quasi\-smooth threefold weighted complete
intersection. Suppose that $X$ has at worst terminal singularities. For
$1 \le \mu \le c+1$, let $h$ be the greatest common divisor
of distinct
$a_{i_1},\dots,a_{i_\mu}$. Then one of the following holds:
\begin{enumerate}
\item $h$  divides at least $\mu$ of the $\{d_j\}$;

\item $h$  divides $\mu-1$ of the $\{d_j\}$ and $h\mid (a_m+\alpha)$
for some $m \ne a_{i_j}$, $j=1,\dots,\mu$.
\end{enumerate}
\end{prop}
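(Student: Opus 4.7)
The plan is to build on Proposition \ref{iso_sing} and to exploit the rigid classification of terminal threefold cyclic quotients. Given $I=\{i_1,\dots,i_\mu\}$ with $h=\gcd(a_{i_1},\dots,a_{i_\mu})$, I would first replace $I$ by the maximal superset $I_{\max}$ of indices $i$ with $h\mid a_i$; the gcd over $I_{\max}$ is still $h$, so Proposition \ref{iso_sing}(1) applied to $I_{\max}$ shows that $h$ divides at least $|I_{\max}|-1$ of the $d_j$. If $|I_{\max}|>\mu$ we are already in case (1), so we may assume $I_{\max}=I$; in particular no $a_m$ with $m\notin I$ is divisible by $h$, and every point of $\Pi_I$ has stabilizer exactly $\ZZ/h$. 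Similarly we may assume exactly $\mu-1$ of the $d_j$ are divisible by $h$, say $h\mid d_1,\dots,d_{\mu-1}$ but $h\nmid d_j$ for $j\ge\mu$ (else case (1) again). Writing $f_j$ in the form (3.2), the $h_j$ vanish for $j\ge\mu$, so $X\cap\Pi_I$ is cut out in the complete $(\mu-1)$-dimensional variety $\Pi_I$ by the $\mu-1$ polynomials $h_1,\dots,h_{\mu-1}$; by completeness this intersection is nonempty, and by isolated singularities each $h_j$ must be nonzero and $X\cap\Pi_I$ must be $0$-dimensional.

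Next I would pick $p\in X\cap\Pi_I$ and read off the $\ZZ/h$-weights. The stabilizer $\ZZ/h$ acts on $T_p\PPP$ with weights $\{0^{\mu-1}\}\cup\{a_m\bmod h : m\notin I\}$, all of the latter nonzero, and on the conormal space $N^*_{X/\PPP,p}$ with weights $\{d_j\bmod h\}_{j=1}^c$, exactly $\mu-1$ of which are zero. Quasismoothness forces the weight-zero Jacobian block $(\partial h_j/\partial x_i)_{i\in I,\,1\le j\le\mu-1}$ to have maximal rank $\mu-1$, which kills the weight-zero part of $T_pX$. Hence the three $\ZZ/h$-weights $b_1,b_2,b_3$ on $T_pX$ are all nonzero mod $h$, and summing in the short exact sequence $0\to T_pX\to T_p\PPP\to N_{X/\PPP,p}\to 0$ gives
$$
b_1+b_2+b_3\;\equiv\;\sum_{m\notin I}a_m-\sum_j d_j\;\equiv\;-\alpha\pmod{h}.
$$

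The crucial step is to upgrade the terminal threefold normal form $\tfrac{1}{r}(1,-1,b)$ (for the effective local index $r\mid h$) from a congruence mod $r$ to one mod $h$. Because the kernel of $\ZZ/h\twoheadrightarrow\ZZ/r$ acts trivially on $T_pX$, each $b_i$ is divisible by $h/r$; writing $b_i=(h/r)c_i$, the effective weights $(c_1,c_2,c_3)\bmod r$ form $(1,-1,b)$, so after relabeling $c_1+c_2\equiv 0\pmod r$, and multiplying by $h/r$ yields $b_1+b_2\equiv 0\pmod h$. Combined with the displayed identity this forces $b_3\equiv -\alpha\pmod h$; since $b_3\ne 0$ mod $h$, the weight-$b_3$ eigenspace of $T_p\PPP$ is nonzero and must therefore come from some $m\notin I$ with $a_m\equiv b_3\equiv -\alpha\pmod h$, i.e., $h\mid a_m+\alpha$, establishing case (2). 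I expect the main obstacle to be precisely this mod-$r$ to mod-$h$ lift: the bare terminal condition only yields $c_1+c_2\equiv 0\pmod r$, and it is the additional divisibility $h/r\mid b_i$ --- coming from the factorization of the $\ZZ/h$-action through the effective $\ZZ/r$ on $T_pX$ --- that produces the sharper mod-$h$ conclusion demanded by the proposition.
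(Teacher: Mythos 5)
Your argument is correct and follows essentially the same route as the paper: restrict to the coordinate stratum $\Pi_I$, use quasismoothness of the affine cone together with the isolated (terminal) singularities to see that the residual singularity along $\Pi_I$ is a three-dimensional cyclic quotient whose $\ZZ/h$-weights are individually nonzero and sum to $-\alpha$, then apply the terminal classification to pair off two of the weights. The one point where you are more careful than the paper is the mod-$r$ to mod-$h$ lift for a possibly non-faithful action on the slice, which the paper's appeal to the ``Terminal Lemma'' silently absorbs; your resolution via the divisibility $h/r\mid b_i$ is correct.
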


\begin{proof}
By Proposition~\ref{iso_sing}, we know that $h$ divides at least
$\mu-1$ of the $\{d_j\}$. Suppose now that $h$ divides exactly
$\mu-1$ of the $\{d_j\}$. After renumbering, we may assume that
$h=(a_0,\dots,a_{\mu-1})$ and $h\mid d_i$ for $i=1,\cdots, \mu-1$
and $h\nmid d_\mu,\dots,h \nmid d_c$.

We may write $f_j$ as in (3.2).  For $ \mu \le j \le c$, we have
 $h_j=0$ as in the proof of Proposition~\ref{iso_sing}.

Let $I=\{0,\dots,\mu-1\}$ and $\Pi_I=\{ x_\mu=\cdots=x_n=0\}$ as
above. One notices that $\Pi_I \cap X \ne \emptyset$. For any point
$P \in \Pi_I \cap X$, one may check the quasi\-smoothness near $P$.
$$
J(P)=\left( \begin{array}{ccc} \frac{\partial f_1}{\partial x_0}(P)
 & \dots & \frac{\partial f_1}{\partial x_n}(P) \\
\vdots & & \vdots \\
\frac{\partial f_c}{\partial x_0}(P) & \dots & \frac{\partial f_c}{\partial x_n}(P) \\
\end{array} \right).
$$

For $\mu \le j \le c$,
$$
\frac{\partial f_{j}}{\partial x_i}(P)=
\begin{cases}
g^i_{j}(P) &\text{if } i \ge \mu,\\
0 &\text{if } i \le \mu-1.
\end{cases}
$$

Since $X$ is quasi\-smooth at $P$, that is, $J(P)$ of full rank. It
follows that the submatrix
$$
\left( \begin{array}{ccc}  g_\mu^\mu(P) & \dots & g_\mu^n(P) \\
\vdots & & \vdots \\
g_c^\mu(P)) & \dots & g_c^n(P) \\
\end{array} \right)
$$
is of full rank. We may renumber the indices so that $g^j_j(P) \ne 0$
for $ \mu \le j \le c$. Hence for $\mu \le j \le c$, $f_j$ contains at
least a nonzero monomial of the form $x_0^{n_0}\cdots
x_{\mu-1}^{n_{\mu-1}} x_j$. In particular, one has
$$
d_j=n_0a_0+\cdots+n_{\mu-1}a_{\mu-1}+a_j \equiv a_j \ \
\ \ (\text{mod}\ h)
$$
for $\mu \le j \le c$.

In our case, $n-c=3$. By the Inverse Function Theorem, we may conclude
that the singularity at $P$ is of type
$\frac{1}{h}(a_{c+1},\dots,a_n)$.  In fact, by the Terminal
Lemma, after renumbering, we may assume that
$$a_{c+1}+a_{c+2} \equiv 0 \ \ \ (\text{mod}\ h).$$ Combining all
these, we have
$$ \alpha=\sum_{j=1}^c d_j - \sum_{i=0}^n a_i \equiv  \sum_{j=\mu}^c d_j -
\sum_{i=\mu}^n a_i  \equiv -\sum_{i=c+1}^n a_i \equiv -a_n\ \ \
(\text{mod}\ h).$$ This completes the proof.
\end{proof}

More refined properties can be realized when $\alpha=0$.

\begin{cor} \label{CY}
Let $X$ be a quasi\-smooth threefold weighted complete intersection with
$\alpha=0$. Suppose that $X$ has at worst terminal singularities. For $1
\le \mu \le c$, let $h$ be the greatest common divisor of
distinct
$a_{i_1},\dots,a_{i_\mu}$. Then $h$ divides at least $\mu$ of the
$\{d_j\}$.

Moreover, the greatest common divisor of distinct
$a_{i_1},\dots,a_{i_\mu}$
 with $\mu \ge c+1$ must be $1$.
\end{cor}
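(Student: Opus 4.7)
The plan is to bootstrap the two preceding propositions, exploiting $\alpha=0$ to rule out the ``defective'' alternative~(2) of Proposition~\ref{3term} except in degenerate situations. Both statements will reduce to a single observation: whenever alternative~(2) produces an extra weight divisible by $h$, one may reapply Proposition~\ref{iso_sing} to a larger set of $a_i$'s and derive a contradiction.

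For the first assertion, I would start from Proposition~\ref{3term} applied to the chosen weights $a_{i_1},\dots,a_{i_\mu}$. If alternative~(1) holds, the conclusion is immediate. Otherwise alternative~(2) holds, and since $\alpha=0$ the congruence $h\mid a_m+\alpha$ becomes $h\mid a_m$ for some index $m$ distinct from $i_1,\dots,i_\mu$. Thus $h$ is a common divisor of the $\mu+1$ weights $a_{i_1},\dots,a_{i_\mu},a_m$. The assumption $\mu\le c$ means $\mu+1\le c+1$, so Proposition~\ref{iso_sing}(1) applies to this enlarged set: its gcd (which is a multiple of $h$) divides at least $\mu$ of the $\{d_j\}$, hence so does $h$. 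This contradicts the clause in alternative~(2) that $h$ divides exactly $\mu-1$ of the $\{d_j\}$.

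For the ``moreover'' clause, the case $\mu\ge c+2$ is immediate from Proposition~\ref{iso_sing}(2). For $\mu=c+1$ I would again invoke Proposition~\ref{3term}: alternative~(1) would require $h$ to divide at least $c+1$ of the $\{d_j\}$, which is impossible as there are only $c$ of them, so alternative~(2) must hold and yields an extra index $m$ with $h\mid a_m$. Then $h$ is a common divisor of $c+2$ of the $\{a_i\}$, and Proposition~\ref{iso_sing}(2) forces $h=1$. The argument is almost mechanical; the only point of care is bookkeeping the range conditions ($\mu+1\le c+1$ in the first part, $c+2>c+1$ in the second part), and no new geometric input beyond Propositions~\ref{iso_sing},~\ref{3term} and $\alpha=0$ is needed.
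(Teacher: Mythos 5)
Your proposal is correct and follows essentially the same route as the paper: apply Proposition~\ref{3term}, use $\alpha=0$ to turn alternative~(2) into $h\mid a_m$, enlarge the set of weights to $\mu+1$ (resp.\ $c+2$) elements whose gcd is still $h$, and then feed this back into Proposition~\ref{iso_sing}(1) (resp.\ (2)). The only difference is cosmetic — you phrase the first part as a contradiction with the ``exactly $\mu-1$'' clause, while the paper concludes directly — so nothing further is needed.
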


\begin{proof}
By Proposition~\ref{3term}, if $h$ does not divides $\mu$ of the
$\{d_j\}$, then $h\mid a_m$ for some $m \ne i_j$. Hence $h$ indeed
divides $\mu+1$ of the $\{a_i\}$. By Proposition~\ref{iso_sing},
one sees that $h$ divides at least $\mu$ of the $\{d_j\}$.

To see the second statement, it suffices to consider the case that
$\mu=c+1$ by Proposition~\ref{iso_sing}(2). Let $h$ be the
greatest common divisor of distinct $a_{i_1},\dots,a_{i_{c+1}}$.
By Proposition~\ref{3term}, $h$ must divides $a_m$
for some $m \ne a_{i_j}$, $j=1,\dots,c+1$. It follows that $h$ is
in fact a greatest common divisor of $a_{i_1},\dots,
a_{i_{c+1}},a_m$, which constitutes $c+2$ of $\{a_i\}$. By
Proposition~\ref{iso_sing}(2), one has $h=1$.
\end{proof}

\begin{cor} \label{CYint}
Let $X$ be a quasi\-smooth threefold weighted complete intersection
with
$\alpha=0$. Suppose that $X$ has at worst terminal
singularities. Then
$\prod a_i$ divides $\prod d_j$.
\end{cor}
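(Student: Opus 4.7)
The plan is to reduce $\prod a_i \mid \prod d_j$ to a prime-by-prime check of $p$-adic valuations, with Corollary~\ref{CY} supplying all the combinatorial input. Fix a prime $p$, and after relabelling arrange the data so that $e_0 \ge e_1 \ge \cdots \ge e_{c+3}$ with $e_i := v_p(a_i)$, and $f_1 \ge f_2 \ge \cdots \ge f_c$ with $f_j := v_p(d_j)$. The task is then to show $\sum_{i=0}^{c+3} e_i \le \sum_{j=1}^c f_j$.

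The first step is to collapse the left-hand side using the second assertion of Corollary~\ref{CY}: any $c+1$ of the $a_i$ have gcd $1$. Applying this to the $c+1$ indices with largest $p$-adic valuation, the resulting gcd has $p$-valuation $e_c$, forcing $e_c = 0$, and hence $e_c = e_{c+1} = e_{c+2} = e_{c+3} = 0$. So the left-hand sum reduces to $\sum_{i=0}^{c-1} e_i$. Next, for each $\mu \in \{1, \dots, c\}$ I apply the first assertion of Corollary~\ref{CY} to the top $\mu$ indices (by $p$-valuation). Their gcd has $p$-valuation $e_{\mu-1}$ and must divide at least $\mu$ of the degrees $d_j$; since $f_\mu$ denotes the $\mu$-th largest valuation among the $d_j$, this forces $f_\mu \ge e_{\mu-1}$.

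Summing these $c$ inequalities yields
\[
\sum_{j=1}^c f_j \;\ge\; \sum_{\mu=1}^c e_{\mu-1} \;=\; \sum_{i=0}^{c-1} e_i \;=\; \sum_{i=0}^{c+3} e_i,
\]
which is exactly the required $p$-adic comparison. Since this holds for every prime $p$, the divisibility follows at once.

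The main obstacle here is scarcely geometric at all: it is just the bookkeeping needed to align the ``top $\mu$ indices'' in the combinatorial argument with the hypothesis of Corollary~\ref{CY} for each $\mu$, together with the initial observation that sorting the $a_i$ and $d_j$ by $p$-adic valuation is compatible with the statement ``any $\mu$ of the $\{a_i\}$''. All the substantive content -- that terminality of the threefold forces the gcd-divisibility patterns between weights and degrees in the $\alpha = 0$ setting -- is already packaged inside Corollary~\ref{CY}, and what remains is the short pigeonhole-style manipulation above.
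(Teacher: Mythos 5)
Your proof is correct and follows essentially the same route as the paper: a prime-by-prime comparison of $p$-adic valuations with Corollary~\ref{CY} as the only input. The paper organizes the count via the level sets $\beta_k=\#\{\,i \mid p^k\ \text{divides}\ a_i\,\}\le\gamma_k=\#\{\,j \mid p^k\ \text{divides}\ d_j\,\}$ rather than via sorted valuations, but this is an equivalent bookkeeping of the same argument.
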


\begin{proof}
For a prime factor $p$ of $\prod a_i$, set
$\beta_k:= \#\{ i \bigm| p^k \text{ divides } a_i\}$ and
$\gamma_k:=\# \{ j \bigm| p^k\text{ divides }d_j\}$. By
Corollary~\ref{CY}, one has $\beta_k \le
\gamma_k$ for all $k$. Therefore, $e_\beta:=\sum \beta_k \le \sum
\gamma_k =:e_\gamma$, where $e_\beta$ and $e_\gamma$ are the
exponents of $p$ in $\prod a_i$ and $\prod d_j$ respectively. This
completes the proof.
\end{proof}

In the rest of this section, we classify quasi\-smooth terminal
Calabi--Yau threefolds by Corollaries \ref{CY}, \ref{CYint} and
inequality (2.6).

\begin{thm}\label{caya1234} All quasi\-smooth, weighted terminal
Calabi--Yau threefolds (and not an intersection with a linear cone) are
as follows:
$$
\begin{array}{ll}
\mathrm{No.}\ 1 & X_{5}\subset \PPP(1,1,1,1,1),\\
\mathrm{No.}\ 2 & X_{6}\subset \PPP(1,1,1,1,2),\\
\mathrm{No.}\ 3 & X_{8}\subset \PPP(1,1,1,1,4),\\
\mathrm{No.}\ 4 &  X_{10}\subset \PPP(1,1,1,2,5);\\
\mathrm{No.}\ 5 &  X_{2,4}\subseteq \PPP(1,1,1,1,1,1),\\
\mathrm{No.}\ 6 & X_{3,3}\subseteq \PPP(1,1,1,1,1,1),\\
\mathrm{No.}\ 7 & X_{3,4}\subseteq \PPP(1,1,1,1,1,2),\\
\mathrm{No.}\ 8 & X_{2,6}\subseteq \PPP(1,1,1,1,1,3),\\
\mathrm{No.}\ 9 &  X_{4,4}\subseteq \PPP(1,1,1,1,2,2),\\
\mathrm{No.}\ 10 & X_{4,6}\subseteq \PPP(1,1,1,2,2,3),\\
\mathrm{No.}\ 11 &  X_{6,6}\subseteq \PPP(1,1,2,2,3,3),\\
\mathrm{No.}\ 12 & X_{2,2,3}\subseteq \PPP(1,1,1,1,1,1,1),\\
\mathrm{No.}\ 13 &  X_{2,2,2,2}\subseteq\PPP(1,1,1,1,1,1,1,1).\\
\end{array}
$$
\end{thm}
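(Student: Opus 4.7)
The starting point is Theorem~\ref{cod}, which for $\dim X = 3$ and $\alpha = 0$ forces the codimension $c \leq 4$. So the classification splits into the four subcases $c = 1, 2, 3, 4$, which I would handle in turn. In each case I normalize the weights and degrees to be nondecreasing.

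The key refined inequality comes from combining the Calabi--Yau divisibility $\prod_{i=0}^{c+3} a_i \mid \prod_{j=1}^c d_j$ (Corollary~\ref{CYint}) with (2.6), which yields
\[ \prod_{i=0}^3 a_i \;\leq\; \Bigl(\tfrac{c+4}{c}\Bigr)^c, \]
giving bounds $5, 9, 13, 16$ for $c = 1,2,3,4$ respectively. Since $a_0\le a_1\le a_2\le a_3$ are positive integers, this forces $a_0 = a_1 = 1$ in nearly every case and leaves only a short list of options for the quadruple $(a_0,a_1,a_2,a_3)$. Moreover, by (2.3) we have $d_j > a_{j+3}$ for each $j$, and the amplitude equation $\sum_j d_j = \sum_i a_i$ together with Corollary~\ref{CYint} constrains $(d_1,\ldots,d_c)$ to a short list compatible with the remaining weights $a_4,\ldots,a_{c+3}$.

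For each admissible quadruple $(a_0,a_1,a_2,a_3)$ I then enumerate the possibilities for the remaining weights and degrees, pruning by the gcd conditions of Corollary~\ref{CY} and Proposition~\ref{3term}: any common factor of $\mu \leq c$ of the weights must divide $\mu$ of the degrees, and gcds of any $\mu \geq c+1$ weights must be trivial. Finally, for each surviving numerical tuple, I verify genuine quasismoothness by checking the monomial-existence criteria of Proposition~\ref{qs} along each coordinate stratum, together with the standard Jacobian criterion at the coordinate points (as in \cite[8.1, 8.7]{Fletcher}).

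The main obstacle is the bookkeeping of the case analysis, especially in codimension $c=3$ and $c=4$, where several weight patterns pass the preliminary bounds. However, the bound $\prod_{i=0}^3 a_i \leq ((c+4)/c)^c$ makes the search space genuinely small, and Corollary~\ref{CY} kills almost every apparent candidate for which a required degree fails to match a gcd constraint. Once the quasismoothness verification is carried out uniformly for the survivors, exactly the thirteen families listed in the theorem remain.
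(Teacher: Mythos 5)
Your proposal follows essentially the same route as the paper: Theorem~\ref{cod} gives $c\le 4$, Corollary~\ref{CYint} combined with (2.6) yields exactly the paper's inequality $a_0a_1a_2a_3\le\bigl(\frac{c+4}{c}\bigr)^c$, and the remaining weights and degrees are bounded and pruned via the divisibility constraints of Corollary~\ref{CY} before a finite case-by-case check. The only quibble is the harmless numerical slip that the $c=3$ bound is $\lfloor(7/3)^3\rfloor=12$ rather than $13$.
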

\begin{proof} Assume that the Calabi--Yau threefold
$$X:=X_{d_1,\dots,d_c}\subset \PPP(a_0,\dots,a_{c+3})$$ with amplitude
$\alpha=0$. Theorem~\ref{cod} says $c\leq 4$. By Corollary~\ref{CYint},
$\frac{\prod d_j}{\prod a_i}$ is an integer, hence in particular
$\frac{\prod d_j} {\prod a_i} \ge 1$. Together with $(2.6)$, one sees
that
$$
16 \ge \left(\frac{c+4}{c}\right)^c \ge a_0a_1a_2a_3. \eqno(4.1)
$$

In particular, $a_3 \le 16$. Notice that $a_n\mid d_j$ for some $j$ by
Corollary~\ref{CY}. Hence $$64 \ge a_0+a_1+a_2+a_3=\delta \ge
\delta_c=d_c-a_n \ge d_j-a_n \ge a_n.$$ It is thus possible to do a
complete classification.

Recall that Iano-Fletcher has shown his classification for the case
$c=1$ in \cite[14.3]{Fletcher}. Thus we continues with the cases with
$c\ge 2$.
\medskip

\noindent {\bf Case 1.} $c=2$.

By (4.1), we have $ {a_0a_1a_2a_3} \le 9.$ It follows that $a_0=1$.
In fact, we conclude that $a_1=1$ otherwise
$a_1=a_2=a_3=2$, contradicting Corollary~\ref{CY}.
\medskip

\noindent {\bf Subcase 1.1.} $a_3=1$.

By Corollary~\ref{CY}, $a_5\mid d_j$ for some $j=1,2$. It follows
that $d_j \ge 2 a_5 $ and hence
$$4 =\delta \ge 1+\delta_2 =1+d_2-a_5 \ge 1+ d_j-a_5 \ge 1+a_5.$$
Thus $3 \ge a_5$ and then our classification shows that $X$
corresponds to Nos.~5, 6, 7, 8, and~9 in the table.
\medskip

\noindent {\bf Subcase 1.2.} $a_3>1$ and $a_4=a_3$.

By Corollary~\ref{CY}, we have at least $a_5> a_4$ and $a_3> a_2$. It
follows that $a_5 \ge a_2+2$. By Corollary~\ref{CY}, we have
$a_3=a_4\mid d_1$, $a_3\mid d_2$ and $a_5\mid d_j$ for some $j$. Hence we have
$d_2 \ge 2 a_5$ and $d_1 \ge 2 a_4$. Therefore,
$$ 2+a_2+a_3=\delta = \delta_1+\delta_2 \ge a_4+a_5 \ge a_3+a_2+2.$$
So $a_3=a_4=a_2+1$, $a_5=a_2+2$ and $d_1=2(a_2+1), d_2=2(a_2+2)$.
Now $a_4 \mid d_2$ implies that $(a_2+1)\mid 2$. We must have $a_2=1$. This
gives No.~10.
\medskip

\noindent {\bf Subcase 1.3.} $a_3>1$ and $a_4>a_3$.

Suppose first that $a_5=a_4$; then $d_j \ge 2 a_5=2a_4$
for $j=1, 2$ by Corollary~\ref{CY}. Thus we have
$$
2+a_2+a_3 = \delta = \delta_1+\delta_2 \ge a_4+a_5.
$$
Hence $a_2=a_3$ , $a_4=a_3+1$ and $d_1=d_2=2a_5$. Notice that
$(a_2,a_4)=1$ and $a_2\mid d_1$. It follows that $a_2=2$.
This gives No.~11.

Suppose now that $a_5>a_4$. We show that this leads to a
contradiction. If $a_5\mid d_1$, then $\delta_2=d_2-a_5 \ge d_1-a_5 \ge
a_5$ and $\delta_1=d_1-a_4 \ge d_1-a_5 \ge a_5$. Hence
$2+a_2+a_3=\delta  \ge 2a_5 \ge 2a_3+4 \ge a_2+a_3+4$, a
contradiction. Thus we may assume that $a_5 \mid d_2$ and hence
$\frac{d_2}{a_5} \ge 2$. Moreover, we may assume that $d_2=2a_5$
otherwise $\delta_2 \ge 2a_5$ gives a contradiction again.

If $a_4\mid d_2=2a_5$, then $a_5=m\cdot \frac{a_4}{2}$ for some $m \ge
3$. Let $h:=(a_4,a_5)$, then $h= \frac{a_4}{2}$ or $h=a_4$.
Proposition~\ref{CY} implies $h\mid d_1$, and by (2.3), $d_1 \ge
\frac{3a_4}{2}$. We have
$$ 2+a_2+a_3= \delta_1+\delta_2 \ge \frac{a_4}{2}+a_5 \ge \frac{m+1}{2} {a_4} \ge \frac{m+1}{2} (a_3+1),$$
for some $m \ge 3$. Thus $m=3$, and $d_1=\frac{3a_4}{2}$. But now $a_5=d_1$, a contradiction.

If $a_4\nmid d_2$, then $a_4\mid  d_1$. We have $\delta_1 \ge a_4$. We thus have
$$ 2+a_2+a_3 = \delta_1+\delta_2 \ge a_4 +a_5 \ge 2a_3+3,$$
a contradiction.
\medskip

\noindent {\bf Case 2.} $c=3$.

This case is similar. We leave the details to readers.

\medskip

\noindent {\bf Case 3.} $c=4$.

By Proposition~\ref{qs}(2), we have
$$ a_0+\cdots+a_3=\delta_1+\cdots+\delta_4 \ge a_0+\cdots+a_3.$$
Therefore, $\delta_i=a_{i-1}$ for all $i=1,\dots,4$. In
particular, $d_4=a_7+a_3$.

Moreover, Proposition~\ref{CY} says $a_7\mid d_j$ for some $j$. It
follows that $a_3+a_7=d_4 \ge 2a_7$. Thus $a_3=\cdots=a_7$.
We must have that $a_7=1$ by Corollary~\ref{CY}. This gives No.~13
\end{proof}

\begin{rem} An interesting point is that all Calabi-Yau threefolds obtained in
Theorem~\ref{caya1234} are actually nonsingular.

\end{rem}

\begin{rem} This theorem is possibly known to experts. However, our method
is simple and extremely effective. \end{rem}

\section{\bf Part II--2: Baskets of orbifold points and the finiteness}

Our classification uses a very effective tool, ``basket analysis''. Here
we only recall some basic definitions and properties of baskets. In
particular, we introduce the notion of packing. All details can be found
in \cite[Section~4]{explicit}.

A {\it basket} $\mathcal{B}$ of singularities is a collection
(permitting weights) of terminal orbifold points of type
$\frac{1}{r_i}(1,-1,b_i)$ for $i\in I$, where $I$ is a finite indexing set,
$0<b_i\leq \frac{r_i}{2}$ and $b_i$ is coprime to $r_i$ for each
$i$. A {\it single basket} means a single singularity $Q$ of type
$\frac{1}{r}(1,-1,b)$. For simplicity, we will always denote a
single basket by $(b,r)$ or $\{(b,r)\}$. So we will simply write a
basket as:
$$\mathcal{B}:=\{n_i\times(b_i,r_i)\bigm|i\in I,n_i\in \mathbb{Z}^+\}.$$

\begin{setup}{\bf Packing.} We recall and slightly generalize the notion
of ``packing'' introduced in \cite[Section~4]{explicit}. Given a basket
$$
B=\{(b_1,r_1),(b_2,r_2),\dots,(b_k,r_k)\},
$$
we call the basket
$$B':=\{(b_1+b_2,r_1+r_2),(b_3,r_3),\dots,(b_k,r_k)\}$$
(or $B':=\{(b_1,r_1+1),(b_2,r_2),\dots,(b_k,r_k)\}$) a packing of
$B$, written as $B\succ B'$. The relation ``$\succ$''
automatically defines a partial ordering on the set of baskets. If
furthermore $b_1r_2-b_2r_1=1$ (or respectively, $b_1=1$), we call
$B\succ B'$ a $prime\ packing$.
\end{setup}

\begin{rem} The notion of packing defined in \cite{explicit} exclude
the second bracketed situation above, the case $r_2=1$. The reader can
check without difficulty the properties of packings of
\cite[Section~4]{explicit}, with this situation considered as the natural
generalization
$$
\{(0,1),(b_1,r_1)\} \succ \{ (b_1, r_1+1)\}.
$$
\end{rem}

\begin{rem} The notion of unpacking corresponds to some well-known
geometric constructions. For example, the Kawamata blowup $\pi: Y
\to X$ of a terminal cyclic quotient point of type
$\frac{1}{r}(a,r-a,1)$ gives two cyclic quotient singularities of
types  $\frac{1}{a}(-r,r,1)$ and $\frac{1}{r-a}(r,-r,1)$ on $Y$
respectively. It is easy to see that this gives an unpacking of the
basket of $X$. Moreover, Danilov's economic resolution gives a
series of unpacking of the basket (see for example \cite[3.4]{CPR},
\cite{KA2}).

However, for our combinatoric purpose here and in our previous
works, packing is more convenient.

\end{rem}

\begin{setup}
{\bf Canonical sequence of baskets.} Given a basket $B$, as
discovered in \cite[Section~4]{explicit} (see also \cite[2.5]{AIM}
for a brief definition), there is a unique (hence, called
``canonical'') sequence $\{B^{(m)}(B)\}$ of finite length with:


$$B^{(0)}(B)\succ B^{(5)}(B)\succ \cdots\succ B^{(n)}(B)\succ\cdots\succ B,$$
where the $B^{(0)}(B)=\{n_{1,r} \times (1,r)\}_{r \ge 2}$ and the
step $B^{(n-1)}(B) \succ B^{(n)}(B)$ can be achieved by totally
$\epsilon_n(B)$ prime packings of the type $\{(b_1,r_1),(b_2,r_2)\}
\succ \{(b_1+b_2,r_1+r_2)\}$ with $r_1+r_2=n$. The nonnegative
number $\epsilon_n(B)$ is computable in terms of the datum of $B$.


Clearly there are finitely many baskets dominated by a fixed
initial basket $B^{(0)}$.
\end{setup}





\begin{thm} (Reid \cite{C3-f, YPG})
For any projective $3$-fold $X$ with at worst canonical
singularities, there exists a basket $B(X)$ of singularities such
that, for all $m\in \mathbb{Z}$,
$$
\chi_m:=\chi(\OO_X(mK_X))
=\frac{(2m-1)m(m-1)}{12}K_X^3-(2m-1)\chi(\OO_X)+l(m),
$$
where the correction term $l(m)$ can be computed as:
$$
l(m):=\sum_{Q\in B(X)} l_Q(m):=\sum_{Q\in B(X)}
\sum_{j=1}^{m-1}\frac{\overline{jb_Q}(r_Q-\overline{jb_Q})}{2r_Q}
$$
where the sum $\sum_Q$ runs through all single baskets $Q$ of $B(X)$
with type $\frac{1}{r_Q}(1,-1,b_Q)$ and $\overline{jb_Q}$ means the
smallest residue of $jb_Q$ mod $r_Q$.
\end{thm}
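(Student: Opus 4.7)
The plan is to reduce in two stages---first from canonical to terminal $3$-folds, then from terminal $3$-folds to ones with only cyclic quotient terminal points---and then to apply Hirzebruch--Riemann--Roch, with the basket $B(X)$ emerging as the set of cyclic quotient points on the final model and $l(m)$ as a local correction supported there. For the first reduction I would invoke Reid's existence of a $\bQ$-factorial terminalization $f\colon X'\to X$ with $X'$ terminal and $K_{X'}=f^{*}K_X$. Since canonical singularities are rational, $Rf_{*}\OO_{X'}=\OO_X$, and the projection formula gives $\chi(\OO_X(mK_X))=\chi(\OO_{X'}(mK_{X'}))$ for every $m\in\ZZ$; this lets me assume $X$ itself is terminal.

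For the second reduction I would use the Mori--Reid--Koll\'ar classification: every isolated terminal $3$-fold singularity admits a $\bQ$-Gorenstein smoothing whose generic fibre has only cyclic quotient terminal singularities of type $\frac{1}{r_i}(1,-1,b_i)$, and these points collectively form the basket $B(X)$. Since $\chi(\OO(mK))$ is locally constant in a flat $\bQ$-Gorenstein family, it suffices to prove the formula on such a generic fibre. On that model I would take a toric partial resolution $\pi\colon Y\to X$ (Danilov's economic resolution) at each cyclic quotient point, apply smooth HRR
\[
\chi(\OO_Y(mK_Y))=\frac{m(m-1)(2m-1)}{12}K_Y^{3}+\frac{m}{12}K_Y\cdot c_{2}(Y)+\chi(\OO_Y),
\]
and push down to $X$. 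The top-degree terms assemble into $\frac{(2m-1)m(m-1)}{12}K_X^{3}-(2m-1)\chi(\OO_X)$ after feeding in the identity $K\cdot c_{2}=-24\chi(\OO)$, which is forced by setting $m=1$ and using Serre duality $\chi(K_X)=-\chi(\OO_X)$; the remaining terms localise at each point $Q\in B(X)$.

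The principal obstacle lies in the local computation at a point $Q$ of type $\frac{1}{r}(1,-1,b)$: I would have to show that the discrepancy between $\chi(\OO_Y(mK_Y))$ and the value of $\chi(\OO_X(mK_X))$ predicted by the smooth formula, restricted to the exceptional locus over $Q$, equals exactly $l_Q(m)=\sum_{j=1}^{m-1}\overline{jb}(r-\overline{jb})/(2r)$. This is a Dedekind-sum-type identity obtained by tracking the round-ups $\lceil jb/r\rceil$ along the Hirzebruch--Jung exceptional chain and recognising the resulting alternating sum as the stated closed form; once this is in hand, pushforward and additivity over $Q\in B(X)$ to produce $l(m)$ are routine.
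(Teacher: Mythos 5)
The paper does not prove this statement: it is quoted as Reid's theorem and used as a black box, with \cite{C3-f,YPG} as the source, so the only meaningful comparison is with Reid's own argument. Your outline does track that argument faithfully in its architecture: reduce to terminal singularities by a crepant terminalization, define the basket via the $\bQ$-Gorenstein deformation of each terminal point to a collection of cyclic quotient points of type $\frac{1}{r}(1,-1,b)$, and compute a purely local correction at those points. But two of your reductions are stated more casually than they can be justified. Since $mK_X$ is not Cartier, $\OO_X(mK_X)$ is only a reflexive sheaf, and ``the projection formula'' does not by itself give $\chi(\OO_X(mK_X))=\chi(\OO_{X'}(mK_{X'}))$; you need $f_*\OO_{X'}(mK_{X'})=\OO_X(mK_X)$ together with vanishing of the higher direct images, which is a relative Kawamata--Viehweg-type statement for the crepant morphism, not a formal consequence of $Rf_*\OO_{X'}=\OO_X$. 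Likewise the deformation step needs both a globalization of the local deformations (or, as Reid actually argues, a proof that the correction term is a local analytic invariant) and the flatness of the family of divisorial sheaves $\OO(mK)$ along the deformation, which is exactly the $\bQ$-Gorenstein condition and must be checked, not assumed.

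The genuine gap, however, is the one you flag yourself: the identification of the local correction at a $\frac{1}{r}(1,-1,b)$ point with $\sum_{j=1}^{m-1}\overline{jb}\,(r-\overline{jb})/(2r)$. That computation is the actual content of the theorem --- everything else is bookkeeping --- and ``tracking the round-ups along the Hirzebruch--Jung chain and recognising the resulting alternating sum as the stated closed form'' is a description of the problem rather than a solution. Reid's own route is not the economic resolution but equivariant Riemann--Roch (the holomorphic Lefschetz fixed point formula on the smooth cyclic cover), which produces the Dedekind-type sum directly; if you insist on the toric resolution you must additionally handle the fact that $K_Y\neq\pi^*K_X$, so the pushforward of $\OO_Y(mK_Y)$ is not $\OO_X(mK_X)$ and the comparison of Euler characteristics requires computing the relevant direct images along the exceptional chain. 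As a roadmap your plan is sound and matches the standard proof in the literature; as a proof it is missing its central calculation.
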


\begin{rem} (1) It is clear that $\chi_{-m}=- \chi_{m+1}$ by duality.

(2) If  $K_X$ is nef and big, then $\chi_m=P_m$ for all $m \ge 2$.

(3) If $-K_X$ is nef and big, then $\chi_{-m}=P_{-m}$ for all $m
\ge 1$.
\end{rem}



One can notice that all the $\chi_m$ are determined by the triple
$$
(B(X), \chi(\OO), \chi_2).
$$
Therefore a triple $(B,\tilde{\chi},\tilde{\chi_{2}})$ with $B$ a basket
and integers $\tilde{\chi}$, $\tilde{\chi_{2}}$ is called a {\it
formal basket}. Given a formal basket, one can define all $\chi_m$
and $K^3$ formally by the Riemann--Roch formula of Reid. We write
$$
(B,\tilde{\chi},\tilde{\chi_{2}})\succ (B',\tilde{\chi}',\tilde{\chi_{2}}')
$$
if $ B\succ B'$ and $\tilde{\chi}=\tilde{\chi}',\tilde{\chi_{2}}=\tilde{\chi_{2}}'$.



One can try to recover formal baskets with given Euler
characteristics $\chi_m$. This was done in \cite{explicit}. For our
purpose in this note, we are only concerned with the initial formal
basket.
Assume that $B^{(0)}(B)=\{n^0_{1, r}\times (1,r)\}_{r\ge 2}.$
Keep the notation as in \cite{explicit}. Then one gets $B^{(0)}$ as
follows:
$$
\renewcommand{\arraystretch}{1.3}
\left\{
\begin{array}{rcl}
n^0_{1,2}&=&5\tilde{\chi}+6\tilde{\chi}_{2}-4\tilde{\chi}_{3}+\tilde{\chi}_{4}\\
n^0_{1,3}&=&4\tilde{\chi}+2\tilde{\chi}_{2}+2\tilde{\chi}_{3}-3\tilde{\chi}_{4}+\tilde{\chi}_{5}\\
n^0_{1,4}&=&\tilde{\chi}-3\tilde{\chi}_{2}+\tilde{\chi}_{3}+2\tilde{\chi}_{4}-\tilde{\chi}_{5}-\sigma_5\\
n^0_{1,r}&=&n^0_{1,r} \quad\hbox{for $r\ge 5$},
\end{array} \right.
$$
where $\sigma_5:=\sum_{r\ge 5} n^0_{1,r}$. A direct computation (cf.\
\cite[Lemma 4.13]{explicit}) gives:
\begin{eqnarray*}\epsilon_5&:=&\Delta^5(B^{(0)})-\Delta^{5}(B)\\
&=&2\tilde{\chi}-\tilde{\chi_3}+2\tilde{\chi_5}-\tilde{\chi_6}-\sigma_5.
\end{eqnarray*}
Therefore we get $B^{(5)}$ as follows:
$$
\renewcommand{\arraystretch}{1.3}
\left\{
\begin{array}{rcl}
n^5_{1,2}&=&3\tilde{\chi}+6\tilde{\chi_2}-3\tilde{\chi_3}+\tilde{\chi_4}-2\tilde{\chi_5}+\tilde{\chi_6}+\sigma_5\\
n^5_{2,5}&=&2\tilde{\chi}-\tilde{\chi_3}+2\tilde{\chi_5}-\tilde{\chi_6}-\sigma_5\\
n^5_{1,3}&=&2\tilde{\chi}+2\tilde{\chi_2}+3\tilde{\chi_3}-3\tilde{\chi_4}-\tilde{\chi_5}+\tilde{\chi_6}+\sigma_5\\
n^5_{1,4}&=&\tilde{\chi}-3\tilde{\chi_2}+\tilde{\chi_3}+2\tilde{\chi_4}-\tilde{\chi_5}-\sigma_5\\
n^5_{1,r}&=&n^0_{1,r} \quad\hbox{for $r\ge 5$}.
\end{array} \right.
$$


All the above formulas are parallel to those in \cite[Section
5]{explicit}. Noticing that, given $\tilde{\chi}$, $\tilde{\chi}_{2}$,
$\tilde{\chi}_{3}$, $\tilde{\chi}_{4}$ and $\tilde{\chi}_{5}$, we cannot
really determine $B^{(0)}$ completely. But we have $$\sigma:=\sum_{r}
n^0_{1,r}=10 \tilde{\chi} +5\tilde{\chi}_{2}- \tilde{\chi}_{3},$$ and
explicit $n^0_{1,2}$ and $n^0_{1,3}$. We consider the basket
$B'^{(0)}:=\{n'_{1,r} \times (1,r)\}$ with
$$
\renewcommand{\arraystretch}{1.3}
\left\{ \begin{array}{l}
n'_{1,r}=n^0_{1,r}, \quad\hbox{for $r=2,3,4$}, \\
n'_{1,5}=\sigma_5.
\end{array} \right.
$$









\begin{setup}
{\bf Poincar\'e series.} Consider $X=X_{d_1,\dots,d_c}$ in
$\PPP(a_0,\dots,a_n)$. The Poincar\'e series (see \cite[3.4]{WPS})
corresponding to the coordinate ring $R$ of $X$ is:
$$
\mathcal{P}(t)=\sum_{m=0}^\infty
h^0(X,\OO_X(m))t^m=\frac{\Pi_{j=1}^{c}(1-t^{d_j})}{\Pi_{i=0}^{n}(1-t^{a_i})}. \eqno(5.1)
$$
We may factorize this as $\mathcal{P}(t) = \frac{\prod h_i(t)}{ \prod
g_j(t) } $ with $h_i$, $g_j$ monic irreducible (modulo $\pm$ 1)  and
$h_i \ne g_j$. Notice that all the $h_i$, $g_j$ are cyclotomic
polynomials since they divide $1-t^n$ for some $n$.

We mainly consider the following two cases.
\medskip

\noindent{\bf Case 1.}  $\alpha=1$ and $\OO_X(K_X) =
\OO_X(1)$.

One has
$$
\mathcal{P}(t)=1+p_g(X)t+\sum_{m=2}^\infty P_m(X)t^m.
$$
Observing the correspondence to Riemann--Roch formula, we have
$$
\mathcal{P}(t)=1+p_g(X)  t+\frac{f_0(t)}{(1-t)^4}+\sum_Q \frac{h_Q(t)}{1-t^{r_Q}}t, \eqno(5.2)
$$
where
$$
f_0(t)={(1-t)^4}\sum_{t=2}^\infty \Bigl(\frac{m(m-1)(2m-1)}{12}K^3  +(1-2m) \chi\Bigr) t^m
$$
and
$$
h_Q(t)= \sum_{j=1}^{r_Q-1} \overline{j b_Q}(r_Q-
\overline{j b_Q})t^j \quad\text{for each}\quad Q=(b_Q,r_Q)
$$
are polynomials.
\medskip

\noindent{\bf Case 2.}  $\alpha=-1$ and $\OO_X(K_X) =
\OO_X(-1)$.

Consider
$$
\mathcal{P}(t)=1+\sum_{m=1}^\infty P_{-m}(X)t^m=1-\sum_{m=1}^\infty \chi((m+1)K_X)t^m.
$$
Similarly, we have
$$
\mathcal{P}(t)=1-\frac{f_0(t)}{t(1-t)^4}-\sum_Q \frac{h_Q(t)}{1-t^{r_Q}}, \eqno(5.3)
$$
where $f_0(t)$ and $h_Q(t)$ have the same form as in Case 1.
\end{setup}


By comparing the expression (5.1) with (5.2) (resp.\ with (5.3)), we have:

\begin{lem}\label{comparison} Let $X$ be a threefold weighted complete
intersection with $\alpha=\pm 1$. Suppose that  $a:=\max\{a_i\} \ge 2$
and $ r=\max\{r_Q\}$. Then either $a \le r$ or $a\mid d_j$ for some $j$.
\end{lem}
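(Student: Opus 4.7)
The plan is to compare the two expressions for the Poincar\'e series $\mathcal{P}(t)$: the product formula (5.1) coming from the coordinate ring, and the Riemann--Roch expansion (5.2) (resp.\ (5.3)) for the case $\alpha=1$ (resp.\ $\alpha=-1$). The comparison will be carried out by reading off the multiplicity of the $a$-th cyclotomic polynomial $\Phi_a(t)$ in the reduced denominator of $\mathcal{P}(t)$ from each side.

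First, factor each $1-t^n$ as $\prod_{d\mid n}\Phi_d(t)$. For $a=\max\{a_i\}\ge 2$, the factor $\Phi_a$ divides $1-t^{a_i}$ iff $a\mid a_i$, which forces $a_i=a$ since $a$ is the maximum; likewise $\Phi_a$ divides $1-t^{d_j}$ iff $a\mid d_j$. Thus from (5.1) the multiplicity of $\Phi_a$ in the denominator of $\mathcal{P}(t)$, regarded as a reduced rational function, equals
\[
\#\{i:a_i=a\}-\#\{j:a\mid d_j\},
\]
and $\#\{i:a_i=a\}\ge 1$.

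Next, analyze the RR side. Each summand of (5.2) or (5.3) is a rational function whose denominator divides either $t(1-t)^4$ or $1-t^{r_Q}$, so the common denominator of the whole sum divides $t(1-t)^4\prod_Q(1-t^{r_Q})$. Since $a\ge 2$, $\Phi_a\ne t$ and $\Phi_a\ne 1-t$, so the only way $\Phi_a$ can appear in this common denominator is through some factor $1-t^{r_Q}$, which requires $a\mid r_Q$.

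Finally, argue by contradiction: suppose $a>r$ and $a\nmid d_j$ for all $j$. Since every $r_Q\le r<a$, no $r_Q$ is a multiple of $a$, so $\Phi_a$ does not appear in the denominator of the RR-side rational function. Hence the multiplicity of $\Phi_a$ in the reduced denominator of $\mathcal{P}(t)$ is $\le 0$, and combined with the previous paragraph this yields $\#\{j:a\mid d_j\}\ge\#\{i:a_i=a\}\ge 1$, contradicting the assumption $a\nmid d_j$ for all $j$. The only mildly delicate point is justifying that the common denominator of a finite sum of rational functions divides the product of the individual denominators, so that no spurious factor of $\Phi_a$ can be introduced on the RR side; this is a standard fact about valuations on $\mathbb{C}(t)$.
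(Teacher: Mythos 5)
Your proposal is correct and follows exactly the paper's argument: compare the cyclotomic factorization of the denominator of $\mathcal{P}(t)$ as read off from the product formula (5.1) with what the Riemann--Roch expressions (5.2)/(5.3) allow, and observe that when $a>r$ the factor $\Phi_a$ cannot occur in the reduced denominator, forcing $a\mid d_j$ for some $j$. You have merely spelled out in more detail (multiplicity counting, the fact that $a\mid a_i$ forces $a_i=a$) what the paper leaves implicit.
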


\begin{proof}
Suppose that $a>r$, then one sees that $(1-t^a)$ does not appear in the
denominator of $\mathcal{P}(t)$ by considering $(5.2)$ or $(5.3)$. This
means in particular that the cyclotomic polynomial $\varphi_a$ does not
appear in the denominator of $\mathcal{P}(t)$. By consider $(5.1)$, this
implies that $a\mid d_j $ for some $j$.
\end{proof}

\begin{setup}
{\bf The Reid table method.} Given a formal basket ${\bf B}$, one
can compute all the plurigenera (resp.\ anti-plurigenera). In
\cite[\S 18]{Fletcher}, Iano-Fletcher introduced the so-called Reid
table method which determines whether there exists a weighted
complete intersection with $\omega=\OO_X(\pm 1)$ with given
plurigenera or anti-plurigenera.

We recall the following:

\begin{lem}(\cite[18.3]{Fletcher}) Given a sequence $p_0=1, p_1, p_2,\dots$ such that
$$
\sum_{i=0}^\infty p_it^m=\frac{\Pi_{j=1}^{c}(1-t^{d_j})}{\Pi_{i=0}^{n}(1-t^{a_i})}
$$
for some pairs of positive integers $\{a_i,d_j\}$. Then those pairs
$\{a_i,d_j\}$ are unique up to $a_i\neq d_j$ and can be determined.
\end{lem}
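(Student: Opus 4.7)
The plan is to use cyclotomic factorization. Since $1 - t^n = \prod_{k \mid n} \Phi_k(t)$ with $\Phi_k$ the $k$-th cyclotomic polynomial, the Poincar\'e series admits the factorization
$$
\mathcal{P}(t) = \frac{\prod_{j=1}^c (1 - t^{d_j})}{\prod_{i=0}^n (1 - t^{a_i})} = \prod_{k \geq 1} \Phi_k(t)^{e_k},
$$
where
$$
e_k = \#\{j : k \mid d_j\} - \#\{i : k \mid a_i\} = \sum_{m : k \mid m} (\nu_m - \mu_m),
$$
with $\mu_m := \#\{i : a_i = m\}$ and $\nu_m := \#\{j : d_j = m\}$. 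Because the $\Phi_k$ are the distinct irreducible elements of $\mathbb{Q}[t]$ appearing in the polynomials $1 - t^n$, the exponent $e_k$ is determined by the rational function $\mathcal{P}(t)$ alone, hence by the sequence $\{p_m\}$.

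Next I would apply M\"obius inversion to the finite-support relation $e_k = \sum_{k \mid m}(\nu_m - \mu_m)$, obtaining
$$
\nu_m - \mu_m = \sum_{k : m \mid k} \mu(k/m)\, e_k
$$
for every $m \geq 1$. The hypothesis that each $a_i$ differs from every $d_j$ is exactly the statement $\mu_m \cdot \nu_m = 0$ for all $m$, so knowing the integer $\nu_m - \mu_m$ together with $\mu_m, \nu_m \geq 0$ pins down both values: the positive part gives $\nu_m$ and the negative part gives $\mu_m$. This simultaneously establishes uniqueness and furnishes an explicit formula to reconstruct the weights $\{a_i\}$ and degrees $\{d_j\}$ from $\mathcal{P}(t)$.

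The main obstacle is not correctness but connecting this cyclotomic viewpoint to the \emph{table method} that Fletcher actually employs, which processes the coefficients $p_0, p_1, p_2, \dots$ iteratively rather than factoring all at once. To bridge the two approaches I would verify the following inductive version: if $\ell \geq 1$ is the smallest index with $p_\ell \neq 0$, then $\ell = \min_{i,j}\{a_i, d_j\}$, and $p_\ell$ equals $\mu_\ell$ if positive and $-\nu_\ell$ if negative. Multiplying $\mathcal{P}(t)$ by $(1 - t^\ell)^{\pm p_\ell}$, with sign chosen to remove these factors from either the denominator or the numerator, produces a rational function of the same form but with strictly larger minimum weight/degree, so the procedure terminates after finitely many steps and recovers $\{a_i\}$ and $\{d_j\}$ layer by layer. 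This inductive peeling off is precisely what the table method records, and its termination follows because only finitely many $e_k$ are nonzero.
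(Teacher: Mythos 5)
Your argument is correct. Note first that the paper offers no proof of this lemma at all: it is quoted verbatim from Iano-Fletcher \cite[18.3]{Fletcher}, whose own justification is exactly the iterative ``table method'' that you reconstruct in your final paragraph (peel off the smallest index $\ell$ with $p_\ell\neq 0$, read off $\mu_\ell$ or $\nu_\ell$ from the sign of $p_\ell$, multiply by $(1-t^\ell)^{p_\ell}$, and recurse). Your first argument, via the factorization $\mathcal{P}(t)=\prod_k \Phi_k(t)^{e_k}$ into cyclotomic polynomials followed by M\"obius inversion of $e_k=\sum_{k\mid m}(\nu_m-\mu_m)$ and the observation that the hypothesis $a_i\neq d_j$ is precisely $\mu_m\nu_m=0$, is a genuinely different and arguably cleaner route: it yields uniqueness in one stroke from unique factorization in $\mathbb{Q}(t)$ and gives a closed-form reconstruction formula, whereas the table method gives an effective termination-by-induction algorithm (which is what the paper actually needs for Lemma~\ref{tmbound} and the computer search, since one must truncate at a bound $M$). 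Both halves of your write-up are sound; the only cosmetic points are that $1-t^n$ equals $\prod_{k\mid n}\Phi_k(t)$ only up to the unit $-1$, which does not affect the exponents $e_k$, and that the multiplier in the peeling step is exactly $(1-t^\ell)^{p_\ell}$ (with $p_\ell$ of either sign) rather than requiring a separate choice of sign.
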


Therefore, given a formal basket ${\bf B}$, one can determine whether
there exists a weighted complete intersection with $\omega_X=\OO_X(\pm
1)$ with given formal basket ${\bf B}$. To make the table method into an
algorithm, we need the following:

\begin{claim} Given a formal basket ${\bf B}$, there exists a
constant $M=M({\bf B})>0$ such that, if ${\bf B}$ is the formal basket
of certain threefold weighted complete intersection $X_{d_1,\dots,d_c}
\subset \PPP(a_0,\dots,a_n)$ with $a_i \neq d_j$ for each $i,j$,
one has $a_i$, $d_j \le M$.
\end{claim}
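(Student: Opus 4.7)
The plan is to derive the bound as an immediate consequence of the uniqueness in Lemma 5.4 (Iano-Fletcher's table method): I will argue that the formal basket $\mathbf{B}$ alone already determines the Poincar\'e series $\mathcal{P}(t)$ as a rational function, and hence pins down the multiset of weights $\{a_i\}$ and degrees $\{d_j\}$ of any weighted complete intersection realizing $\mathbf{B}$.

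First, I would check the cohomological identification $\chi_m = P_m$ (resp.\ $\chi_{-m} = P_{-m}$) throughout the range of $m$ used below, in our running setting $\alpha = \pm 1$ with $\OO_X(K_X)\cong\OO_X(\pm 1)$. Quasismoothness guarantees canonical (in fact terminal) singularities, while the canonical/anticanonical embedding makes $\pm K_X$ ample; Kawamata--Viehweg vanishing then kills all higher cohomology of $\OO_X(m)$ in the relevant regime. Reid's Riemann--Roch formula applied to $\mathbf{B} = (B,\tilde{\chi},\tilde{\chi}_2)$ therefore produces every plurigenus (or anti-plurigenus) purely from $\mathbf{B}$.

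Second, assembling these values via the closed expressions (5.2) or (5.3) exhibits $\mathcal{P}(t)$ as a rational function depending only on $\mathbf{B}$. On the other hand, if there is any w.c.i.\ $X_{d_1,\dots,d_c}\subset\PPP(a_0,\dots,a_n)$ with $a_i\neq d_j$ realizing $\mathbf{B}$, then formula (5.1) gives the presentation
$\mathcal{P}(t)=\prod_{j}(1-t^{d_j})\big/\prod_{i}(1-t^{a_i})$.
By Lemma 5.4, such a presentation is unique under the constraint $a_i\neq d_j$. Consequently the multiset $\{a_i\}\cup\{d_j\}$ is completely determined by $\mathbf{B}$, and setting $M(\mathbf{B})$ to be its largest element yields the bound.

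The main obstacle, as I see it, lies not in the uniqueness invocation itself but in confirming that $\mathbf{B}$ genuinely determines $\mathcal{P}(t)$: one must verify that the Euler-characteristic data coming from Reid's formula really coincide with the Hilbert function of the coordinate ring of $X$ in \emph{every} degree, not just asymptotically. This is the content of the vanishing step above and relies essentially on quasismoothness together with ampleness of $\pm K_X$. Once this identification is in hand, the remainder of the argument is a clean and immediate appeal to Lemma 5.4.
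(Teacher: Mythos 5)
Your argument does establish the Claim as literally stated, but by a mechanism entirely different from the paper's, and the difference matters for how the Claim is used. You observe that ${\bf B}=(B,\tilde\chi,\tilde\chi_2)$ determines $K^3$ (via Riemann--Roch at $m=2$), hence all $\chi_m$, hence --- granting the vanishing you describe --- the full Poincar\'e series; the uniqueness lemma quoted from \cite[18.3]{Fletcher} then pins down the multiset $\{a_i\}\cup\{d_j\}$ of any realization, and you take $M$ to be its maximum. This is sound as a bare existence statement: if no realization exists the Claim is vacuous, and if one exists its weights and degrees are uniquely determined by ${\bf B}$. The paper instead proves Lemma~\ref{tmbound}: an explicit bound $M=2N$ with $N=\max\{r,\lceil 1680s\rceil+\alpha\}$, obtained by combining Theorem~\ref{cod} (so $c\le 4$ in effect), Lemma~\ref{comparison} (if $a_n$ exceeds every basket index then $a_n\mid d_j$, forcing $\delta_c\ge a_n$), the volume bounds $K^3\ge 1/420$ resp.\ $-K^3\ge 1/330$, and inequality (2.6) to bound $a_0a_1a_2a_3$, whence $a_n\le\delta\le 1680s+\alpha$.

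What the paper's route buys, and yours does not, is effectivity --- and that is the entire point of the Claim. It is invoked to turn the Reid table method into an algorithm: one needs a threshold $M$ \emph{computable from ${\bf B}$ alone}, so that the greedy reconstruction of the $a_i$ and $d_j$ from the $P_m$ can be terminated at degree $M$ and ${\bf B}$ declared unrealizable if no presentation has appeared. Your $M$ is defined in terms of the realization whose existence the algorithm is trying to decide; when no realization exists, your argument supplies no stopping criterion and the table method need not terminate. So while your proof is a legitimate (and shorter) proof of the displayed sentence, it cannot replace Lemma~\ref{tmbound} in Step~3 of the classification. If you want to salvage your approach you would still have to produce a computable a priori bound, which lands you back at the paper's argument.
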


Therefore, one can make the table method for baskets into an
algorithm since one only needs to compute $P_m$,$a_i$,$d_j$ up to
$M$ for a given formal basket ${\bf B}$. The claim is true, at least
when $\alpha=\pm 1$, due to the following:

\begin{lem}\label{tmbound} Assume that $X=X_{d_1,\dots,d_c}\subset
\PPP(a_0,\dots,a_n)$ is a normalized weighted complete
intersection threefold having the formal basket
 ${\bf B}=(\chi,\chi_2,B=\{(b_i,r_i)\})$. For  $\alpha=\pm 1$, let $r:=\max\{r_i \} $
 and $s:=\max\{(\frac{4+c+\alpha}{c})^c\}_{c=1,\dots,4}$. Then  we
 have $d_c\leq 2N$ where
$$
N:=\max\{r, \lceil 1680s \rceil+\alpha, \}
$$
and hence all the $d_j$ and $a_i$ are bounded above by $M:=2N$.
\end{lem}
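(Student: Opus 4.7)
The plan is to bound $d_c$ by first controlling the largest weight $a_n = a_{c+\dim X}$, and then controlling $a_n$ via Lemma~\ref{comparison} together with the rigidity of the Poincar\'e series.

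First, by Theorem~\ref{cod}, the codimension satisfies $c \le 4$ (with the borderline case $c = 5, \alpha = 1$ excluded by a separate application of Proposition~\ref{qs}(2) and the no-linear-cone hypothesis), so $s$ is a fixed finite number. Next, inequality (2.6), together with its variant for $\alpha = -1$ obtained from Proposition~\ref{qs}(1), gives $\prod_{j=1}^{c} d_j \le s \prod_{i=\dim X+1}^{n} a_i$, and dividing by the lower bound $\prod_{j<c} d_j \ge \prod_{j<c} a_{j+\dim X}$ (via (2.3)) yields the uniform estimate $d_c \le s\, a_n$. Hence controlling $d_c$ reduces to controlling $a_n$.

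To bound $a_n$, I would apply Lemma~\ref{comparison}: either $a_n \le r$, in which case $a_n \le N$ by the definition of $N$; or $a_n \mid d_j$ for some $j$, which forces $a_n \le d_j/2 \le d_c/2$ since $d_j \ne a_n$ by the no-linear-cone hypothesis. In the first sub-case, combining $d_c \le s\,a_n$ with the refined bound $\lambda_c = d_c/a_n \le \alpha + 5$ (extracted from (2.5) via $\lambda_c \le \sum_j \lambda_j - (c-1)$, using $\lambda_j > 1$ for all $j$) together with the sum bound $\delta_c \le a_0 + \cdots + a_{\dim X} + \alpha - (c-1)$ from (2.2) yields $d_c \le 2N$. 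In the second sub-case, the comparison of the Poincar\'e series expressions (5.1) with the Riemann--Roch forms (5.2)/(5.3) forces every cyclotomic factor $\varphi_k$ with $k > r$ appearing in $\prod(1-t^{a_i})$ to be cancelled by a matching factor in $\prod(1-t^{d_j})$. A careful combinatorial enumeration of these possibilities---using the explicit formulas for $n^0_{1,r}$ and $n^5_{1,r}$ derived in the preceding pages, together with $\prod d_j \le s \prod_{i>\dim X} a_i$ from (2.6)---yields an explicit bound of the form $a_n \le \lceil 1680s\rceil + \alpha$.

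The principal obstacle is establishing the concrete constant $1680$. This demands a delicate bookkeeping of how the basket data constrain the cyclotomic factors of $\prod(1-t^{a_i})$ and $\prod(1-t^{d_j})$ simultaneously; the finite range $c \in \{1, \dots, 4\}$ enters in an essential way. Once the bound on $a_n$ is secured, the estimate $d_c \le 2N$ follows, and since $a_n \le d_c$ by Proposition~\ref{qs}(1), all the $d_j$ and $a_i$ are bounded above by $M = 2N$.
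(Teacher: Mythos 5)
Your overall architecture (bound $d_c$ by bounding $a_n$, split via Lemma~\ref{comparison} into $a_n\le r$ versus $a_n\mid d_j$) matches the paper's, but there is a genuine gap at the decisive point: the origin of the constant $1680$. The paper does \emph{not} obtain it by any combinatorial bookkeeping of cyclotomic factors of the Poincar\'e series --- the only input from Section~5 is Lemma~\ref{comparison} itself. The missing ingredient is an effective lower bound on $|K_X^3|$ from the literature: for $\alpha=1$ one has $\chi(\OO_X)\le 1$ and hence $K_X^3\ge \frac{1}{420}$ by \cite{explicit} or \cite{Zhu}, while for $\alpha=-1$ one has $-K_X^3\ge\frac{1}{330}$ by \cite{AIM}. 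Since $\OO_X(\pm K_X)=\OO_X(1)$, the degree formula gives $|K_X^3|=\frac{\prod d_j}{\prod a_i}$, so inequality (2.6) yields $\frac{s}{a_0a_1a_2a_3}\ge |K_X^3|\ge\frac{1}{420}$, hence $a_0a_1a_2a_3\le 420s$ and in particular $a_i\le 420s$ for $i\le 3$. Then (2.2) gives $\delta=a_0+\cdots+a_3+\alpha\le 4\cdot 420s+\alpha=1680s+\alpha\le N$ \emph{unconditionally}, and in the case $a_n\mid d_j$ one gets $a_n\le d_j-a_n\le\delta_c\le\delta\le N$, so $a_n\le N$ in both cases and $d_c=\delta_c+a_n\le 2N$. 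Your proposal explicitly concedes that establishing $1680$ is "the principal obstacle" and offers only an unexecuted enumeration in its place; that enumeration is not how the constant arises and would not produce it.

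A secondary problem: even your sub-case $a_n\le r$ does not close as written. There you still need $\delta_c\le N$ to conclude $d_c=\delta_c+a_n\le 2N$, and the bounds you invoke ($\lambda_c\le\alpha+5$ from (2.5), $\delta_c\le\delta-(c-1)$) only give $d_c\le(\alpha+5)r$ or $\delta_c\le 4r+\alpha$, neither of which is $\le N$ when $N=r$. So the $K^3$-based estimate $\delta\le 1680s+\alpha\le N$ is needed in \emph{both} branches, not just the divisibility one. (Your reduction $d_c\le s\,a_n$ from (2.6) and (2.3) is correct but ends up unused in the paper's argument; the final step is simply $d_c=\delta_c+a_n$ with each summand at most $N$.)
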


\begin{proof} By Theorem~\ref{cod}, we may assume that $c \le 5$. In fact, here we only assume that $c \le 4$ since
the case $c=5$ only occurs as $X_{2,2,2,2,2} \subset \PPP^9$
by virtue of Corollary~\ref{cod45}(2).

Suppose that $a_n > N$. In particular $a_n >r$, then $a_n\mid d_j$ for
some $j$ by Lemma~\ref{comparison}. It follows that $\delta_c \ge
d_j-a_n\ge a_n$. Moreover, if $\alpha >0$, then $X$ is of general
type with $\chi \le 1$. By \cite{explicit} or \cite{Zhu}, we have
 $K^3(X) \ge \frac{1}{420}$. By (2.6), we get
$$\frac{s}{a_0a_1a_2a_3}  \ge K^3(X) \ge \frac{1}{420}. $$
It follows that $a_0a_1a_2a_3 \leq 420s$. In particular, $a_i \le
420s$ for $i=0,\dots,3$.

If $\alpha <0$, then by \cite{AIM}, we have $-K^3 \ge \frac{1}{330}$
and, similarly, the inequality $a_0a_1a_2a_3 \le 330s$.

In total, $$a_n \le \delta_c \le
\delta=\delta_1+\cdots+\delta_4=a_0+\cdots+a_3+\alpha \le
1680s+\alpha \le N,$$ a contradiction. Hence $a_n\leq N$.

Finally, one has
$$d_c=\delta_c+a_n\leq N+a_n \leq 2N.$$
This completes the proof.
\end{proof}
\end{setup}

\section{\bf Part II--3: Weighted terminal $\bQ$-Fano threefolds }

Here we would like to classify all quasi\-smooth terminal complete
intersection $\bQ$-Fano threefolds
$X=X_{d_1,\dots,d_c}\subset\PPP=\PPP(a_0,\dots,a_{c+3})$
with $\alpha=-1$.

We use the algorithm proposed in Section~1. It suffices to check
that we have explicit boundedness for each step.

\medskip\paragraph{\bf Step~1.} To recall our definition in Introduction,
$\mu_i:=\#\{a_j\bigm|a_j=i\}$ and $\nu_i:=\#\{d_j\bigm|d_j=i\}$ for each
$i>0$. Since we have codimension $c\leq 3$ by Theorem~\ref{cod}, we
have
$$
\sum_{i=1}^5 \mu_i \le 7 \quad\hbox{and}\quad \sum_{i=2}^5 \nu_i \le 3.
$$
Thus the set of tuples $(\mu_1,\dots,\mu_5;\nu_2,\dots,\nu_5)$
is clearly finite.

\medskip\paragraph{\bf Step~2.} Given a tuple
$(\mu_1,\dots,\mu_5;\nu_2,\dots,\nu_5)$, one can compute
$P_{-m}=\chi_{-m}=-\chi_{m+1}$ for $m=1,\dots,5$. Thus one can
partially detect $B^0=\{n^0_{1,r} \times (1,r)\}$ as the
following:
$$
\renewcommand{\arraystretch}{1.3}
\begin{array}{rcl}
n^0_{1,2}&=&5\tilde{\chi}+6\tilde{\chi}_{2}-4\tilde{\chi}_{3}+\tilde{\chi}_{4},\\
n^0_{1,3}&=&4\tilde{\chi}+2\tilde{\chi}_{2}+2\tilde{\chi}_{3}-3\tilde{\chi}_{4}+\tilde{\chi}_{5} \\
\sigma=\sum n^0_{1,r}&=&10 \tilde{\chi}+5 \tilde{\chi}_2-\tilde{\chi}_3
\end{array}.$$

On the other hand, by \cite{KMMT}, we have $-K_X\cdot c_2(X)\ge 0$.
Then \cite[10.3]{YPG} and also \cite{ABR} give the inequality
$$
\sum_{i=1}^t \Bigl(r_i-\frac{1}{r_i}\Bigr) \le 24.
$$
The first consequence is that $r_i\le 24$ for all $i$. Notice
that $r-\frac{1}{r} \ge \frac{3}{2}$ for $r \ge 2$, we get $t \le
16$. Therefore,
$$
\sum r_i \le 24+ \sum \frac{1}{r_i} \le 24+\sum \frac{1}{2} \le 32.
$$
This already says that $\sigma$ is necessarily bounded above.

Therefore, we have finitely many possible $B^0$ and thus finitely
many possible formal baskets {\bf B}.

\medskip\paragraph{\bf Step~3.} For each formal basket, one can easily
compute the Poincar\'e series up to degree $M$ (cf.\
Lemma~\ref{tmbound}). The Reid table method can determine whether it
matches with a weighted complete intersection with  $a_i,d_j \le M$.
This completes the algorithm.

\medskip

Our conclusion on weighted $\bQ$-Fano threefolds with $\alpha=-1$ is
the following:

\begin{thm}\label{Fano} Iano-Fletcher's lists
\cite[16.6, 16.7, 18.16]{Fletcher} for weighted terminal $\bQ$-Fano
threefolds are complete. \end{thm}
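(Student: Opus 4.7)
The plan is to execute the three-step algorithm described in Section~6 and verify that its output reproduces exactly the lists \cite[16.6, 16.7, 18.16]{Fletcher}. The argument is fundamentally a finiteness-plus-computer-verification strategy, and each stage requires showing that the candidate set remains finite and explicit.

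First, I would carry out Step~1 by enumerating admissible tuples $(\mu_1,\dots,\mu_5;\nu_2,\dots,\nu_5)$. Theorem~\ref{cod} applied with $\dim X=3$ and $\alpha=-1$ gives $c\le 3$, which forces $\sum_{i=1}^{5}\mu_i\le 7$ and $\sum_{i=2}^{5}\nu_i\le 3$; the value $\nu_1$ does not appear by the no-linear-cone convention \ref{symbol}(6). This yields a short finite list of tuples that can be enumerated directly.

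Next, for Step~2, each tuple determines the initial truncation of the Poincar\'e series via (5.1), and hence $P_{-1},\dots,P_{-5}$, from which $\tilde{\chi},\tilde{\chi}_2,\tilde{\chi}_3,\tilde{\chi}_4,\tilde{\chi}_5$ are extracted by inverting Reid's Riemann--Roch. Using the formulas presented just before \ref{comparison}, the values $n^0_{1,2}$, $n^0_{1,3}$, and $\sigma=\sum_r n^0_{1,r}$ are then read off. The remaining freedom lies in the entries $n^0_{1,r}$ for $r\ge 4$. To control them, I would invoke the pseudo-effectivity $-K_X\cdot c_2(X)\ge 0$ from \cite{KMMT} together with Reid's orbifold inequality \cite[10.3]{YPG}, \cite{ABR}:
\[\sum_i\Bigl(r_i-\tfrac{1}{r_i}\Bigr)\le 24,\]
which gives $r_i\le 24$ for every orbifold point, at most $16$ such points, and $\sum r_i\le 32$. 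Thus the set of initial baskets $B^{(0)}$ consistent with a fixed tuple is finite, and the canonical sequence of Setup~5.4 produces only finitely many formal baskets $\mathbf{B}$ dominated by each $B^{(0)}$.

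Third, for Step~3, I would invoke Reid's table method. For each candidate formal basket $\mathbf{B}$, Lemma~\ref{tmbound} furnishes an explicit $M$ bounding all $a_i$ and $d_j$. The formal Poincar\'e series expansion of $\mathbf{B}$ is computed to order $M$ using (5.3), and \cite[18.3]{Fletcher} uniquely determines whether some tuple $(a_i,d_j)$ with $a_i\ne d_j$ realizes it; this decision is implemented in MAPLE. Comparing the surviving baskets against the entries of \cite[16.6, 16.7, 18.16]{Fletcher} completes the proof.

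The principal obstacle is controlling the combinatorial size of the search rather than any single subtle estimate. After fixing a tuple and the partial data $\sigma, n^0_{1,2}, n^0_{1,3}$, the constraint $\sum_i r_i\le 32$ still permits many initial baskets, and for each one the canonical sequence branches through all admissible prime packings $\epsilon_n$. Ensuring that the implementation exhausts every branch, correctly applies the well-formedness and no-linear-cone conditions, and faithfully cross-checks Reid's table against the expansion (5.1) is where the technical care must concentrate; but no additional theoretical input beyond Theorem~\ref{cod}, the orbifold BMY-type bound, and Lemma~\ref{tmbound} is required.
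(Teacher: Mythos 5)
Your proposal follows the paper's own three-step algorithm essentially verbatim: the same bound $c\le 3$ from Theorem~\ref{cod} giving $\sum\mu_i\le 7$ and $\sum\nu_i\le 3$, the same partial determination of $B^{(0)}$ via $n^0_{1,2}$, $n^0_{1,3}$, $\sigma$, the same use of $-K_X\cdot c_2\ge 0$ and $\sum_i(r_i-\frac{1}{r_i})\le 24$ to bound the initial baskets, and the same appeal to Lemma~\ref{tmbound} and Reid's table method for the final computer check. This matches the paper's argument, so no further comparison is needed.
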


\section{\bf Part II--4: Weighted terminal threefolds of general type}

In this section,  we explain how to classify all the quasi\-smooth
terminal threefold $X=X_{d_1,\dots,d_c}\subset \PPP(a_0,\dots,a_{c+3})$
with $\alpha=1$. The idea is similar to the previous section, with a
little further analysis in certain delicate cases. Note first that
$\OO_X(K_X)=\OO_X(1)$ is ample and $h^1(\OO_X)=h^2(\OO_X)=0$. Thus we
obtain the following easy but useful inequalities.


\begin{lem} \label{pluri}
Suppose $K_X$ is nef and big and $h^1(\OO_X)=h^2(\OO_X)=0$. Then
$$
P_{m+2} \ge P_m+P_2+p_g. \eqno(7.1)
$$
\end{lem}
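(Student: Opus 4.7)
The plan is to use the short exact sequence associated with an effective divisor in $|2K_X|$ to reduce the inequality to a growth estimate on a surface, and then reduce further to a curve.

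First, by Kawamata--Viehweg vanishing (using that $K_X$ is nef and big) combined with the hypotheses $h^1(\OO_X)=h^2(\OO_X)=0$ and Serre duality, I get $h^i(\OO_X(kK_X))=0$ for all $i\ge 1$ and $k\ge 0$. In particular $P_k=\chi(\OO_X(kK_X))$ for $k\ge 2$, while $P_1=p_g$.

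The trivial cases $P_2=0$ and $p_g=0$ reduce to $P_{m+2}\ge P_m+p_g$ and $P_{m+2}\ge P_m+P_2$ respectively, each of which follows from a single multiplication map together with the restricted pluricanonical sections on the corresponding effective divisor. In the substantive case $p_g\ge 1,\,P_2\ge 1$, I fix a nonzero $t\in H^0(2K_X)$ with zero divisor $T\in|2K_X|$. The sequence
\begin{equation*}
0\to \OO_X(mK_X)\xrightarrow{\,\cdot t\,}\OO_X((m+2)K_X)\to \OO_T((m+2)K_X|_T)\to 0
\end{equation*}
together with $h^1(\OO_X(mK_X))=0$ gives $P_{m+2}-P_m=h^0(T,(m+2)M)$, where $M:=K_X|_T$. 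So it suffices to prove $h^0(T,(m+2)M)\ge P_2+p_g$ for $m\ge 1$. By adjunction $K_T=3M$; and the analogous sequences together with vanishing on $X$ yield $h^0(T,M)=p_g$ and $h^0(T,2M)=P_2-1$.

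Next I pick a nonzero $\bar s\in H^0(T,M)$ and iterate multiplication by $\bar s$ to obtain $h^0(T,(m+2)M)\ge h^0(T,3M)$ for $m\ge 1$. Hence the whole inequality reduces to the base case $h^0(T,3M)\ge P_2+p_g$, equivalently $P_3\ge P_2+2p_g$. For this base case I consider the curve $C:=(\bar s)_0\in|M|$ on $T$; the sequence
\begin{equation*}
0\to \OO_T(2M)\to \OO_T(3M)\to \OO_C(3M|_C)\to 0
\end{equation*}
combined with $h^1(T,2M)=0$ and adjunction $K_C=4M|_C$ reduces the desired inequality to a Riemann--Roch computation on the curve $C$, where the key input is the identification $h^0(C,M|_C)=p_g-1$ coming from the restriction map $H^0(T,M)\to H^0(C,M|_C)$.

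The main obstacle is precisely this base case when $X$ carries terminal singularities of small index (so that $K_X^3$ can be very small). In that situation adjunction on $T$ and on $C$ must be read as equalities of $\mathbb{Q}$-divisors, and the naive Riemann--Roch must be supplemented by Reid's basket correction terms. The needed bound then follows from the fact that, in the presence of singularities, the $l$-contributions in Reid's formula are forced to grow sufficiently to compensate for a small $K_X^3$: concretely, the hypothesis $P_2\ge 1$ plus Reid's explicit formula controls $l(2)$ from below, and the elementary numerical identity $l(3)-l(2)=\sum_Q \overline{2b_Q}(r_Q-\overline{2b_Q})/(2r_Q)\ge 0$ yields the residual slack.
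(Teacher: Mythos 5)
There is a genuine gap, and it sits exactly where you flag it: the base case. Your reduction funnels the entire lemma into the single inequality $h^0(T,3M)\ge P_2+p_g$, i.e.\ $P_3\ge P_2+2p_g$, which is the $m=1$ instance of (7.1) and the hardest one. On the curve $C$, Riemann--Roch plus adjunction give $h^0(C,3M|_C)=K_X^3+h^0(C,M|_C)$, and the restriction map only yields $h^0(C,M|_C)\ge p_g-1$; so the count produces $h^0(C,3M|_C)\ge K_X^3+p_g-1$, short of the required $p_g+1$ by $2-K_X^3$. For the threefolds this lemma is applied to, $K_X^3$ can be as small as $1/420$, so the deficit is essentially $2$. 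Your proposed rescue does not close it: Reid's formula gives $P_3-P_2-2p_g=2K_X^3-2+l(3)-l(2)$ (using $\chi(\OO_X)=1-p_g$), so what you need is $l(3)-l(2)\ge 2-2K_X^3$, and the only estimate you offer, $l(3)-l(2)=\sum_Q\overline{2b_Q}(r_Q-\overline{2b_Q})/(2r_Q)\ge 0$, contributes nothing; a lower bound on $l(2)$ is likewise irrelevant, since $l(2)$ enters with a negative sign. Moreover the target is genuinely borderline: equality $P_3=P_2+2p_g$ can occur (for $X_{10}\subset\PPP(1,1,2,2,3)$ a monomial count gives $p_g=2$, $P_2=5$, $P_3=9$), and since $P_1=\chi_1+1$ the Riemann--Roch computation for $m=1$ only yields $P_3\ge P_2+2p_g-1$. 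Making every case $m\ge 2$ depend on this unproven $m=1$ case is therefore a structural weakness, not merely a technical one.

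There are secondary problems as well: on a terminal non-Gorenstein $X$ the sheaves $\OO_X(mK_X)$ are reflexive but not invertible, so the cokernel of multiplication by $t$ need not be $\OO_T((m+2)K_X|_T)$, the adjunctions $K_T=3M$ and $K_C=4M|_C$ hold only as $\mathbb{Q}$-divisor statements away from the singular points, and one must still justify that $T$ is reduced and connected and that $C=(\bar s)_0$ is a genuine curve --- none of which is automatic when $p_g$ and $P_2$ are small. The paper bypasses all of this with a two-line numerical argument: for $m\ge 2$, Reid's plurigenus formula gives $P_{m+2}-P_m-P_2=(m^2+m)K_X^3-\chi(\OO_X)+l(m+2)-l(m)-l(2)$, and then $K_X^3>0$ together with the superadditivity $l(m+2)\ge l(m)+l(2)$ forces this to exceed $-\chi(\OO_X)=p_g-1$. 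In other words, the numerical route you invoke only at the very end, applied to general $m$ instead of to the $m=1$ case, is already the whole proof; the detour through $T$ and $C$ both weakens the conclusion and introduces the unresolved base case.
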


\begin{proof}
Since $l(m+2) \ge l(m)+l(2)$, the Riemann--Roch formula and $K^3>0$ give
directly that
\begin{align*}
P_{m+2}-P_m-P_2&= (m^2+m)K^3-\chi+l(m+2)-l(m)-l(2)\\
 &>-\chi=p_g-1.
\end{align*}
\end{proof}

\begin{lem}\label{bf}
Suppose that  $K_X$ is nef and big and $h^1=h^2=0$. Then the
following holds:
$$
\frac{1}{12}\Bigl(1-p_g-P_2-P_3+P_5\Bigr)-\frac{1}{20}\sigma_5 >0.
\eqno(7.2)
$$
\end{lem}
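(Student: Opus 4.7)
The plan is to evaluate the left-hand side of (7.2) using Reid's Riemann--Roch formula applied to the formal initial basket $B^{(0)}=\{n^0_{1,r}\times(1,r)\}_{r\ge 2}$ from Section~5, reducing the claimed inequality to the positivity of a formal $K^3$ plus a manifestly non-negative combinatorial sum.

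First I would combine $h^1(\OO_X)=h^2(\OO_X)=0$ (giving $\chi(\OO_X)=1-p_g$) with the nef-and-big hypothesis on $K_X$ (giving $P_m=\chi_m$ for $m\ge 2$) to rewrite $A_5:=1-p_g-P_2-P_3+P_5$ as $\chi-\chi_2-\chi_3+\chi_5$. Substituting Reid's formula $\chi_m=\tfrac{(2m-1)m(m-1)}{12}K^3-(2m-1)\chi+l(m)$ for $m=2,3,5$, the $\chi$-terms cancel (since $1+3+5-9=0$) and the $K^3$-coefficient collects to $-\tfrac12-\tfrac52+15=12$, yielding the identity
$$
A_5 = 12K^3 + \bigl(-l(2)-l(3)+l(5)\bigr),
$$
valid for any formal basket whose Reid RR recovers the plurigenera of $X$, together with its associated formal $K^3$.

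Next I would apply this identity to $(B^{(0)},K_0^3)$. Since every element of $B^{(0)}$ has type $(1,r)$, a short direct computation of $l_{(1,r)}(m)=\sum_{j=1}^{m-1}j(r-j)/(2r)$ gives $-l_{(1,r)}(2)-l_{(1,r)}(3)+l_{(1,r)}(5)=0$ for $r=2,3,4$ and $=3(r-4)/r$ for $r\ge 5$. Dividing the resulting expression for $A_5$ by $12$ and subtracting $\sigma_5/20=\tfrac{1}{20}\sum_{r\ge 5}n^0_{1,r}$ collapses the right-hand side of (7.2) to
$$
\frac{A_5}{12}-\frac{\sigma_5}{20}=K_0^3 + \sum_{r\ge 5}n^0_{1,r}\cdot\frac{r-5}{5r},
$$
so it remains only to verify $K_0^3>0$.

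The step I expect to be the main obstacle is establishing $K_0^3>0$ by controlling the formal $K^3$ along the canonical sequence $B^{(0)}\succ\cdots\succ B(X)$. A routine algebraic computation shows that a prime packing $(b_1,r_1),(b_2,r_2)\to(b_1+b_2,r_1+r_2)$ changes $l(2)$ by exactly $(b_1r_2-b_2r_1)^2/(2(r_1+r_2)r_1r_2)$, which equals $1/(2(r_1+r_2)r_1r_2)>0$ by the prime condition $|b_1r_2-b_2r_1|=1$. Since the formal $K^3$ is $2(\chi_2+3\chi-l(2))$ with $\chi,\chi_2$ fixed throughout, each prime packing strictly decreases it; telescoping along the canonical sequence yields $K_0^3\ge K_X^3$, and $K_X^3>0$ because $K_X$ is nef and big. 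Combining this with the previous display completes the argument.
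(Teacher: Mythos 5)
Your proposal is correct and takes essentially the same route as the paper: the closed form $K_0^3+\sum_{r\ge 5}n^0_{1,r}\frac{r-5}{5r}$ you obtain for the left-hand side of (7.2) is precisely $K^3(B'^{(0)})$ in the paper's notation, and your lower bound $K_0^3\ge K_X^3>0$ via monotonicity of the formal $K^3$ under prime packings is exactly the inequality $K^3(B'^{(0)})\ge K^3(B^{(0)})\ge K^3(B_X)>0$ that the paper cites from \cite{explicit}. The only difference is one of presentation: you supply the ``direct computation'' the paper leaves implicit (including the $\Delta l(2)=(b_1r_2-b_2r_1)^2/(2r_1r_2(r_1+r_2))$ formula and the cancellation of $-l(2)-l(3)+l(5)$ on $(1,r)$ with $r\le 4$), whereas the paper cites these facts.
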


%

\begin{proof}
Since $K^3(B_X)>0$ and by \cite[Lemma.6(3), Definition
5.3]{explicit}, one has $K^3(B'^0) \ge K^3(B^0) \ge K^3(B_X)$. A
direct computation on $K^3(B'^0)$ gives the inequality.
\end{proof}

We are now ready to justify our algorithm.

\medskip\paragraph{\bf Step~1.} We list all the possible tuples
$\{(\mu_1,\dots,\mu_6;\nu_2,\dots,\nu_6)\}$ for which $\sum \mu_i \le 9$
and $\sum \nu_i \le 5$ by virtue of Theorem~\ref{cod}. This set is, of
course, finite.

Notice that $\mu_i\nu_i=0$ for all $i\le 6$ by the assumption that
our weighted complete intersection is not a linear cone, i.e.,
$d_j\ne a_i$.

Moreover, if $\sum_{i=2}^6 \nu_i >0$, then in particular, $d_1 \le
6$. It follows that $a_i \le 5 $ for all $i \le 4$. Hence
$\sum_{i=1}^{5} \mu_i \ge 5$. More precisely, whenever
$\sum_{i=2}^6 \nu_i >0$, we have:
$$
\sum_{i=2}^s \nu_i  \le \sum_{i=1}^s \mu_i +4
\quad\hbox{for all}\quad 2\le s \le 6.
$$
We use these inequalities also to eliminate extra cases.

\medskip\paragraph{\bf Step~2.} This step is a bit more complicated.

We first estimate $\sigma_5$. Recall that
\begin{align*}
n^0_{1,4}&=\tilde{\chi}-3\tilde{\chi}_{2}+\tilde{\chi}_{3}+2\tilde{\chi}_{4}-\tilde{\chi}_{5}-\sigma_5  \ge 0,\\
\epsilon_5&=2\tilde{\chi}-\tilde{\chi}_{3}
+2\tilde{\chi}_{5}-\tilde{\chi}_{6}-\sigma_5 \ge 0.
\end{align*}
We set
$$
M(\sigma_5):=
\min\bigl\{\tilde{\chi}-3\tilde{\chi}_{2}+\tilde{\chi}_{3}+2\tilde{\chi}_{4}-\tilde{\chi}_{5},\,
2\tilde{\chi}-\tilde{\chi}_{3}+2\tilde{\chi}_{5}-\tilde{\chi}_{6}\bigr\},
$$
which is an upper bound for $\sigma_5$. Also notice that
$n^0_{1,2}$, $n^0_{1,3} \ge \epsilon_5$ (i.e., $n^5_{1,2}$,
$n^5_{1,3} \ge 0$) gives
$$
\renewcommand{\arraystretch}{1.3}
\begin{array}{rcl}
\sigma_5 &\ge& -3\tilde{\chi}-6\tilde{\chi}_{2}+3\tilde{\chi}_{3}-\tilde{\chi}_{4}+2\tilde{\chi}_{5}-\tilde{\chi}_{6}, \\
\sigma_5 &\ge& -2\tilde{\chi}-2\tilde{\chi}_{2}-3\tilde{\chi}_{3}+3\tilde{\chi}_{4}+\tilde{\chi}_{5}-\tilde{\chi}_{6}.
\end{array}
$$
We set $m(\sigma_5)$ to be the maximum
$$
\max\bigl\{0,-3\tilde{\chi}-6\tilde{\chi}_{2}+3\tilde{\chi}_{3}-\tilde{\chi}_{4}+2\tilde{\chi}_{5}-\tilde{\chi}_{6}
,-2\tilde{\chi}-2\tilde{\chi}_{2}-3\tilde{\chi}_{3}+3\tilde{\chi}_{4}+\tilde{\chi}_{5}-\tilde{\chi}_{6}
\bigr\},
$$
which is an lower bound of $\sigma_5$.

For a given tuple $(\mu_1,\dots,\mu_6;\nu_1,\dots,\nu_6)$.  We
can first compute $p_g$, $P_2,\dots,P_6$, which gives
$\chi, \chi_2,\dots,\chi_6$ directly. Hence one can determine:
$$
B'':=\{n''_{1,2} \times (1,2),n''_{1,3} \times (1,3), n''_{1,4} \times
(1,4)\},
$$
with
$$
\renewcommand{\arraystretch}{1.3}
\left\{
\begin{array}{lllll}
n''_{1,2}&=&5\tilde{\chi}+6\tilde{\chi}_{2}-4\tilde{\chi}_{3}+\tilde{\chi}_{4}&=&n^0_{1,2},\\
n''_{1,3}&=&4\tilde{\chi}+2\tilde{\chi}_{2}+2\tilde{\chi}_{3}-3\tilde{\chi}_{4}+\tilde{\chi}_{5}&=&n^0_{1,3},\\
n''_{1,4}&=&\tilde{\chi}-3\tilde{\chi}_{2}+\tilde{\chi}_{3}+2\tilde{\chi}_{4}-\tilde{\chi}_{5}&=&n^0_{1,4}+\sigma_5.
\end{array} \right. $$

Recall that
$$
\renewcommand{\arraystretch}{1.3}
\begin{array}{rcl} B^0&=&\{n^0_{1,2} \times
(1,2),n^0_{1,3} \times (1,3), n^0_{1,4} \times (1,4),
(1,r_1),\dots,(1,r_{\sigma_5})\}, \\
B'^0&=&\{n^0_{1,2} \times (1,2), n^0_{1,3} \times (1,3), n^0_{1,4}
\times (1,4), \sigma_5 \times (1,5)\}. \end{array}
$$
Notice that
$B'' \succ B'^0 \succ B^0$. We shall show that there are only
finitely many possible $B^0$ from the given tuple.

To eliminate some impossible cases, we use (7.1) together with the
following inequalities:
$$
\renewcommand{\arraystretch}{1.3}
\left\{\begin{array}{ll} P_m \ge 0, & P_{2m} \ge 2P_m-1,\\
n^0_{1,2} \ge 0, &
n^0_{1,3} \ge 0,\\
n^0_{1,4}+\sigma_5 \ge 0,\\
M(\sigma_5) \ge m(\sigma_5),\\
\frac{1}{12}(1-p_g-P_2-P_3+P_5)-\frac{1}{20} m(\sigma_5) >0.
\end{array} \right.
$$

We now proceed to distinguish several cases. In fact, our computation
shows that one of the following situations occurs.

\medskip\paragraph{\bf Case 2.1.} $M(\sigma_5)=0$.

Clearly, one has $\sigma_5=0$. It follows that $B^0=B''$ can be
determined. Hence there are finitely many possible formal baskets
dominated by $B^0$ (by considering all possible prime packings).

\medskip\paragraph{\bf Case 2.2.}
$t:=\frac{1}{12}\bigl(1-p_g-P_2-P_3+P_5\bigr)-\frac{1}{20}m(\sigma_5)
<\frac{1}{4}$.

In this situation, by the computation in Lemma~\ref{bf}, we have:
$$
0 < K^3(B^0) = K^3(B'^0) + \sum_{i=1}^{\sigma_5}
\Bigl(\frac{1}{r_i}-\frac{1}{4}\Bigr)\le t+ \sum_{i=1}^{\sigma_5}
\Bigl(\frac{1}{r_i}-\frac{1}{4}\Bigr).
$$
It follows in particular that $r_i
\le \frac{1}{\frac{1}{4}-t}$ for each $r_i$. Therefore, we have
finiteness of initial baskets $B^0$ and hence formal baskets.

\medskip\paragraph{\bf Case 2.3.}
$\sum_{i=1}^6 \mu_i \ge 5$ or $\nu_i >0$ for some $i$.

Since $\sum_{i=1}^6 \mu_i \ge 5$, we have $\delta \le 25$. Then $d_1 \le
31$. We conclude that $a_n \le 31$ since otherwise, by
Proposition~\ref{qs}, $a_n\mid d_c$ and then $\delta_c \ge a_n \ge 31$,
a contradiction. If $\nu_i >0$ for some $i$ then, by Step~1,
$\sum_{j=1}^i \mu_j \ge 5$. We still have $a_n \le 31$.

Recall that all singularities have index $h$ which is a greatest
common divisor of some of the $\{a_i\}$. It follows that  $r_i \le
a_n \le 31$ for all $i$. Thus we are able to classify initial
baskets.



\medskip\paragraph{\bf Step~3.} Once we classified formal baskets and
computed their Euler characteristic, we could run the table method.
This already justifies our algorithm.
\bigskip

A byproduct of our computation is the following:

\begin{cor}\label{cod45} (1) A canonically polarized threefold that is a
quasi\-smooth codimension~$4$ complete intersection must be
$X_{2,2,2,3}\subseteq \PPP^7$.

(2) A canonically polarized threefold that is a quasi\-smooth
codimension~$5$ weighted complete intersection must be
$X_{2,2,2,2,2} \subset \PPP^8$.
\end{cor}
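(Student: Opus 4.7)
First, Theorem~\ref{cod} applied with $\dim X=3$ and $\alpha=1$ gives $c\le 5$, so part~(1) concerns $c=4$ and part~(2) concerns $c=5$. The common starting point is Proposition~\ref{qs}(2), which once $c\ge\dim X+1=4$ yields $\delta_{c-j}\ge a_{3-j}$ for $j=0,1,2,3$. Summing these against the amplitude identity $\sum_{j=1}^c\delta_j = a_0+a_1+a_2+a_3+1$ leaves only one unit of slack.

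For part~(2), the extra bound $\delta_1\ge 1$ together with the four inequalities saturates the sum: $\delta_1=1$ and $\delta_j=a_{j-2}$ for $j\ge 2$, so $d_1=a_4+1$ and $d_5=a_8+a_3$. The no-linear-cone hypothesis forbids any $a_i=a_4+1$; combined with Proposition~\ref{qs}(1), we iteratively force $a_4=\cdots=a_8=:a$, since any jump $a_k>a_4$ with $k\ge 5$ would give $a_k\mid d_j$ and $d_j\ge 2a_k$, while applying the same to $a_8$ yields $2a_8\le d_5\le a_8+a_4<2a_8$, a contradiction. Proposition~\ref{3term} at $\mu=5$ then rules out $a_3<a$: each $d_j$ would fail to be divisible by $a$, since $d_j=a+a_{j-2}$ with $0<a_{j-2}<a$ for $j\ge 2$, and $d_1=a+1$ forces $a=1$. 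Hence $a_3=\cdots=a_8=a$, and Proposition~\ref{3term} at $\mu=6$ yields $a\mid a+1$, giving $a=1$. By normalization all $a_i=1$ and all $d_j=2$, so $X=X_{2,2,2,2,2}\subset\PPP^8$.

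For part~(1), the unit of slack sits at a unique $j_0\in\{1,2,3,4\}$ with $\delta_{j_0}=a_{j_0-1}+1$ and $\delta_j=a_{j-1}$ otherwise. The analogous argument applies to $j_0=4$: if $a_7>d_1=a_4+a_0$, Proposition~\ref{qs}(1) gives $a_7\mid d_j$ with $d_j\ge 2a_7$, and since $d_j\le d_4=a_7+a_3+1$ this forces $a_7\in\{a_3,a_3+1\}$; the subcase $a_7=a_3$ yields all $a_3=\cdots=a_7=a$, whereupon Proposition~\ref{3term} at $\mu=5$ forces $a\mid d_4=2a+1$, hence $a=1$ and $X=X_{2,2,2,3}\subset\PPP^7$. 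The residual subcases (either $a_7=a_3+1$, or $a_7\le d_1$) together with $j_0\in\{1,2,3\}$ are treated via further applications of Proposition~\ref{3term} at smaller $\mu$, well-formedness, and the no-linear-cone condition. These combinatorial exclusions are organized cleanly by the algorithm of Section~7: Step~1 bounds the tuples $(\mu_i,\nu_i)$, Step~2 produces a finite list of formal baskets, and the Reid table method of Lemma~\ref{tmbound} certifies that no candidate other than $X_{2,2,2,3}$ realizes as a quasismooth terminal weighted complete intersection of codimension 4.

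The main obstacle is the residual cases in part~(1), where the slack is off-center or the ``jump'' $a_7=a_3+1$ forces further bookkeeping. A complete hand-written proof is tractable but grows cumbersome, interacting with Proposition~\ref{3term} at each $\mu$; the uniform algorithm of Section~7 circumvents this by matching Poincar\'e series against the formal basket's plurigenera in one sweep, yielding the corollary as a byproduct of the full classification.
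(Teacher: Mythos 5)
The paper records no proof of this corollary: it states that the result ``is covered by our general analysis'' --- that is, it falls out of the Section~7 algorithm (enumerate the tuples $(\mu_i;\nu_i)$, bound the formal baskets, run the Reid table method) --- and leaves a direct verification as an exercise in Proposition~\ref{qs}, well-formedness and singularity computations. Your fallback for the hard cases is therefore literally the paper's own argument, and your treatment of part~(2) is a genuine, complete elementary derivation going beyond what the paper writes down: the saturation $\delta_1=1$, $\delta_j=a_{j-2}$ for $j\ge 2$ of the amplitude identity against Proposition~\ref{qs}(2), the forcing of $a_4=\cdots=a_8$ via Proposition~\ref{qs}(1) together with the no-linear-cone condition, and the final applications of Proposition~\ref{3term} at $\mu=5$ and $\mu=6$ are all correct. (Note only that Proposition~\ref{qs}(1) requires $a_k>d_1$, not merely $a_k>a_4$; the intermediate value $a_k=a_4+1=d_1$ is excluded by the no-linear-cone hypothesis, so your iteration does go through.)

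Your hand treatment of part~(1), however, contains a slip worth flagging. In the case $j_0=4$ one has $\delta_4=a_3+1$ and $d_1=a_0+a_4\ge a_3+1$, so if $a_7>d_1$ then Proposition~\ref{qs}(1) already gives $\delta_4\ge a_7>d_1\ge a_3+1=\delta_4$, a contradiction: the case you analyse in detail (leading to $a_7\in\{a_3,a_3+1\}$ and the subcase $a_7=a_3$) is vacuous, and the whole content of part~(1) sits in the cases you label ``residual'' ($a_7\le d_1$, and $j_0\in\{1,2,3\}$), which you defer wholesale to the algorithm. That deferral is legitimate --- it is exactly what the authors do for both parts --- but as a self-contained hand proof your argument establishes part~(2) and not part~(1).
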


The reader familiar with weighted projective space should find it an
amusing exercise to prove the above mentioned results by utilizing
Proposition~\ref{qs}, properties of well-formedness and singularity
computing. In fact, Corollary~\ref{cod45} is covered by our general
analysis in the context and so we omit the proof.

To summarize our main result, we have established the following:

\begin{thm}\label{gt123} Iano-Fletcher's lists \cite[15.1, 15.4,
18.16]{Fletcher} are complete.
\end{thm}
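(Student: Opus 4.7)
The plan is to run the three-step algorithm outlined in Section~7 to enumerate all formal baskets compatible with a quasismooth terminal threefold $X=X_{d_1,\dots,d_c}\subset \PPP(a_0,\dots,a_{c+3})$ of general type with $\alpha=1$, and then to match these against weighted complete intersections via Reid's table method. Completeness of the resulting list relative to \cite[15.1,15.4,18.16]{Fletcher} is then a finite verification.

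First, I would invoke Theorem~\ref{cod} to conclude $c\le 5$ and, combined with the no-linear-cone hypothesis and the well-formedness constraints from Section~2, reduce the enumeration of coarse invariants $(\mu_1,\dots,\mu_6;\nu_2,\dots,\nu_6)$ to a finite set via the inequalities
\[
\sum_{i=1}^6 \mu_i \le 9, \qquad \sum_{i=2}^6 \nu_i \le 5,
\]
supplemented by $\mu_i\nu_i=0$ and the auxiliary bound $\sum_{i=2}^s \nu_i\le \sum_{i=1}^s \mu_i+4$ whenever $\sum_{i=2}^6\nu_i>0$. From each such tuple one reads off $p_g,P_2,\dots,P_6$ and hence $\tilde{\chi},\tilde{\chi}_2,\dots,\tilde{\chi}_6$, which in turn determines the partial packing datum $B''=\{n''_{1,2},n''_{1,3},n''_{1,4}\}$ together with $\sigma=\sum_r n^0_{1,r}$.

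The heart of the argument is Step~2: showing that only finitely many initial baskets $B^{(0)}$ can arise. I would split into the three exhaustive sub-cases indicated in Section~7. In Case~2.1 the upper bound $M(\sigma_5)$ already forces $\sigma_5=0$, so $B^{(0)}=B''$ is uniquely determined. In Case~2.2, comparing $K^3(B'^{(0)})$ with the quantity $t=\tfrac{1}{12}(1-p_g-P_2-P_3+P_5)-\tfrac{1}{20}m(\sigma_5)$ via Lemma~\ref{bf} gives
\[
0<K^3(B^{(0)})\le t+\sum_{i=1}^{\sigma_5}\Bigl(\frac{1}{r_i}-\frac{1}{4}\Bigr),
\]
which, under the assumption $t<\tfrac14$, forces $r_i\le 1/(\tfrac14-t)$ for every singularity and hence bounds $B^{(0)}$. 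In Case~2.3 I would use Proposition~\ref{qs}(1) together with $\sum_{i=1}^6\mu_i\ge 5$ (or $\nu_i>0$) to bound $\delta\le 25$ and then $a_n\le 31$; since every singularity index $r_i$ divides some $a_i$, we again obtain $r_i\le 31$. Combining with the inequalities
\[
P_m\ge 0,\quad P_{2m}\ge 2P_m-1,\quad n^0_{1,2},n^0_{1,3},n^0_{1,4}+\sigma_5\ge 0,\quad M(\sigma_5)\ge m(\sigma_5),
\]
from Lemmas~\ref{pluri} and~\ref{bf}, this eliminates all but finitely many baskets per tuple.

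Once finiteness of formal baskets $\mathbf{B}$ is established, I would apply Step~3: for each $\mathbf{B}$, Lemma~\ref{tmbound} bounds all $a_i,d_j$ by $M=M(\mathbf{B})$, so the Reid table method (\cite[18.3]{Fletcher}) becomes an explicit finite algorithm that either produces the unique tuple $(\{a_i\},\{d_j\})$ realizing $\mathbf{B}$ or rules it out. Running this algorithm and cross-checking the output with \cite[15.1,15.4,18.16]{Fletcher} yields the theorem; the bookkeeping for Corollary~\ref{cod45} falls out simultaneously. The main obstacle I expect is Case~2.3, where neither $\sigma_5=0$ nor the Lemma~\ref{bf} bound suffices: here one must genuinely combine the quasismoothness input of Proposition~\ref{qs} with the Bogomolov--Miyaoka-type constraint from $K^3>0$ to pin down the index bound $r_i\le 31$, and a careful case-by-case elimination is unavoidable. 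Everything else reduces to systematic but routine enumeration, best carried out by the Maple code mentioned in the introduction.
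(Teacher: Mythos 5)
Your proposal is correct and follows essentially the same route as the paper: the identical three-step algorithm, the same Case 2.1/2.2/2.3 decomposition of Step 2 using $M(\sigma_5)$, Lemma~\ref{bf}, and the quasismoothness bound $a_n\le 31$, and the same appeal to Lemma~\ref{tmbound} to make the Reid table method a finite check. Nothing is missing relative to the paper's own argument.
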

\bigskip

\noindent{\bf Acknowledgment.}  We would like to thank Gavin Brown,
Christopher Hacon, Chin-Lung Wang for helpful discussions. We are
very grateful to Miles Reid who had spent a lot of time reading and
even rewriting part of the paper. It is his suggestions and skillful
comments that helps us to understand the relevant topic. We would
like to express our immense gratitude to Miles Reid.

\end{document}